\DeclareMathOperator{\cone}{cone}
\DeclareMathOperator{\conv}{conv}
\definecolor{orange}{rgb}{0.8,0.5,0}
\definecolor{gray}{rgb}{0.5,0.5,0.5}
\definecolor{lightred3}{rgb}{1,0.45,0.45}
\definecolor{lightred2}{rgb}{1,0.6,0.6}
\definecolor{lightred}{rgb}{1,0.8,0.8}
\definecolor{lightgreen3}{rgb}{0.45,1,0.45}
\definecolor{lightgreen2}{rgb}{0.6,1,0.6}
\definecolor{lightgreen}{rgb}{0.8,1,0.8}
\definecolor{lightblue3}{rgb}{0.45,0.45,1}
\definecolor{lightblue2}{rgb}{0.6,0.6,1}
\definecolor{lightblue}{rgb}{0.8,0.8,1}
\definecolor{lightgray}{rgb}{0.9,0.9,0.9}
\definecolor{purple}{rgb}{0.5,0,0.5}
\newcommand{\abs}[1]{\lvert#1\rvert}
\tikzset{
  basic/.style  = {draw, text width=6cm, drop shadow, font=\sffamily, rectangle},
  root/.style   = {basic, rounded corners=6pt, thin, align=center,
                   fill=olive!30},
  level 2/.style = {basic, rounded corners=6pt, thin,align=center, fill=cyan!60,
                   text width=8em},
  level 3/.style = {basic, thin, align=left, fill=pink!60, text width=10.5em}
}
\tikzstyle{int}=[draw, fill=blue!20, minimum size=2em]
\tikzstyle{init} = [pin edge={to-,thin,black}]
\tikzstyle{block} = [draw, fill=blue!20, rectangle, minimum height=3em, minimum width=6em]
\tikzstyle{input} = [coordinate]
\tikzstyle{output} = [coordinate]
\tikzstyle{pinstyle} = [pin edge={to-,thin,black}]
\newenvironment{myproof}[1][]{\noindent {\it Proof #1:\;}}{\hfill $\Box$}
\tikzset{surface/.style={draw=blue!70!black, fill=blue!40!white, fill opacity=.6}}
\newcommand\mymid{\,\, : \,\,}
\newcommand\mbb{\mathbb}
\newcommand\ol{\overline}
\newcommand\wh{\widehat}
\newcommand\wt{\widetilde}
\newcommand\B{\mbb{B}}
\newcommand\N{\mbb{N}}
\renewcommand\P{\mbb{P}}
\newcommand\Q{\mbb{Q}}
\newcommand\R{\mbb{R}}
\newcommand\bbS{\mbb{S}}
\DeclareMathOperator\codim{codim}
\DeclareMathOperator\rk{rank}
\DeclareMathOperator\clos{clos}
\DeclareMathOperator\tint{int}
\DeclareMathOperator\gr{Gr}
\DeclareMathOperator\dist{d}
\DeclareMathOperator\gl{GL}
\DeclareMathOperator\aff{aff}
\DeclareMathOperator\lspan{span}
\DeclareMathOperator\id{I}
\DeclareMathOperator\lin{lin}
\newcommand\pgr{\mbb{G}}
\newcommand\sym{\mathcal{S}}
\newcommand\scp[1]{\left\langle #1 \right\rangle}
\numberwithin{equation}{section}
\theoremstyle{remark}
\theoremstyle{plain}
\newtheorem{Thm}[equation]{Theorem}
\newtheorem{Prop}[equation]{Proposition}
\newtheorem{prop}[equation]{Proposition}
\newtheorem{Cor}[equation]{Corollary}
\newtheorem{lem}[equation]{Lemma}
\newtheorem{Lem}[equation]{Lemma}
\newtheorem*{Thm*}{Theorem}
\newtheorem*{Prop*}{Proposition}
\newtheorem*{Cor*}{Corollary}
\newtheorem*{Lemma*}{Lemma}
\newtheorem*{Sublemma*}{Sublemma}
\newtheorem*{Conjecture*}{Conjecture}
\theoremstyle{definition}
\newtheorem{defi}[equation]{Definition}
\newtheorem{Rem}[equation]{Remark}
\newtheorem{Example}[equation]{Example}
\newtheorem{Exm}[equation]{Example}
\newtheorem*{Def*}{Definition}
\newtheorem*{Defs*}{Definitions}
\newtheorem*{Example*}{Example}
\newtheorem*{Examples*}{Examples}
\newtheorem*{LemmaDef*}{Lemma and Definition}
\newtheorem*{Notation*}{Notation}
\newtheorem*{Problem*}{Problem}
\newtheorem*{Question*}{Question}
\newtheorem*{Remark*}{Remark}
\newtheorem*{Remarks*}{Remarks}
\newtheorem*{Warning*}{Warning}
\newcommand{\norm}[1]{\left\lVert#1\right\rVert}
\title[Infeasibility certificates and projective geometry]{Conic programming: infeasibility certificates and projective geometry}
\author{Simone Naldi}
\address{Univ. Limoges, CNRS, XLIM, UMR 7252, F-87000 Limoges, France}
\email{simone.naldi@unilim.fr}
\author{Rainer Sinn}
\address{Freie Universit\"at Berlin, Fachbereich Mathematik und Informatik, Arnimallee 2, 14195 Berlin, Germany}
\email{rsinn@zedat.fu-berlin.de}
\begin{document}
\maketitle

\begin{abstract}
  We revisit facial reduction from the point of view of projective geometry. This leads us to a homogenization strategy in conic programming that eliminates the phenomenon of weak infeasibility. For semidefinite programs (and others), this yields infeasibility certificates that can be checked in polynomial time. Furthermore, we propose a refined type of infeasibility, which we call stably infeasible, for which rational infeasibility certificates exist and that can be distinguished from other infeasibility types by our homogenization.
\end{abstract}

%\tableofcontents
%\newpage

%\begin{keywords}
%Conic Programming, Semidefinite Programming, Infeasibility Certificates, Facial Reduction
%\end{keywords}

%\begin{AMS}
%90C22 Semidefinite Programming; 14Q20 Effectivity, complexity.
%\end{AMS}

\section{Introduction}
A fundamental algorithmic question in optimization is to detect whether a given problem is admissible, that is, whether the constraints yield a non-empty set. This is generally known as the {\it feasibility problem}. It usually amounts to the simultaneous verification of equalities and inequalities involving real functions. For the special class of {\it conic programming}, the admissible set is the intersection of a convex cone with an affine space in a real vector space.

Our interest is focused on the feasibility problem in {\it semidefinite programming} (SDP), a subfamily of conic programming that is a central topic of modern mathematics. Semidefinite programming is a powerful extension of linear programming that enables to convexify hard non-convex optimization problems and to efficiently compute approximate solutions ({\it e.g.} the Goemans-Williamson semidefinite approximation of the MAX-CUT problem \cite{goemans1995improved}). Semidefinite programming is used in several domains, ranging from control theory \cite{boyd1994linear,henrion2005positive} to real algebra \cite{blekherman2012semidefinite}.
For instance, in the analysis of linear differential systems, finding a feasible point yields a Lyapunov function certifying asymptotic stability, while in algebraic settings, semidefinite programs are used to compute sum-of-squares certificates for positivity of polynomials over semi-algebraic sets.

The feasibility problem for semidefinite programs is the decision problem whether or not an affine space intersects the cone of positive semidefinite real symmetric matrices of some fixed size.
It suffers, contrary to the special case of linear programming, from several pathological behaviors that appear quite frequently and can lead to numerical instabilities.
A semidefinite program can be infeasible without admitting a strong separation between the cone and the affine space: this case is called weak infeasibility ({\it cf.} \cite[Part~II, 2.3]{deklerkMR2064921} and Figure~\ref{fig:CP} below). The affine space in a weakly (in-)feasible  program has (Euclidean) distance zero from the cone, which implies that numerical instabilities might occur.

We propose to use the point of view of projective geometry to tackle these issues, i.e.~we aim to \emph{homogenize} the constraints defining the feasible region and to decide feasibility in the linear setup. The main advantage is that the linear setup is quite similar to the compact setup with respect to convex separation.
This leads us to introduce the notion of stably infeasible conic programs, which is natural from two numerical points of view. Firstly, they form the class of programs for which infeasibility is robust with respect to perturbations of the affine equations defining the conic program. Dually, they are the class of infeasible problems for which infeasibility certificates are also robust with respect to numerical errors.

Moreover, the homogeneous setup allows us to use separation arguments (similar to facial reduction) to provide infeasibility certificates (more precisely an interative version) for \emph{any} infeasible semidefinite program. This gives a new and elementary proof of Ramana's theorem stating that the feasibility problem for semidefinite programming is in NP as well as co-NP in the Blum-Shub-Smale model of arithmetic with real numbers.

%\medskip
\subsection{Main results}
We outline the main results in our paper.
In Section~\ref{sec:homog}, we discuss homogenization in the context of conic programs and the behavior of the common feasibility types with respect to homogenization. The main contributions in this section are the Definition~\ref{def:stablyinfeasible} of stably infeasible conic programs and their characterization using homogenization in Theorem~\ref{thm:stablyinfeasiblehom}, which establishes the two types of robustness with respect to numerical errors that stably infeasible conic programs exhibit (see also Corollary~\ref{cor:interiorsep}). 

In the following Section~\ref{sec:certs}, we study infeasibility certificates for conic programs in the general context of Pataki's nice cones \cite{patakinice}. The main result is Theorem~\ref{thm:infeasibilitycerts}, which uses facial reduction on the homogenized problem to determine infeasibility. We focus on the feasibility problem itself from the point of view of (elementary) convex geometry. We study it independently of the choice of an objective function.
In the second part of this section, we discuss the existence of rational infeasibility certificates (given rational input data). We showcase an example essentially due to Scheiderer of a strongly infeasible semidefinite program that does not admit a rational infeasibility certificate.

We then discuss the general approach of homogenization in the special case of semidefinite programs in Section~\ref{sec:homogSDP}. The main result from Section~\ref{sec:certs} gives a new and elementary proof of Theorem~\ref{thm:sdpconp}, originally proved by Ramana using ``extended Lagrange-Slater duals'' of semidefinite programs.

%\medskip
%\textbf{The feasibility problem in semidefinite programming.}
\subsection{Previous work}
We briefly discuss the major achievements related to the feasibility problem in semidefinite programming or in the more general conic case. All of them to date, as far as we are aware, are based on refinements of the dual conic program in one way or another.

Several dual programs (different from the classical Lagrange dual) and corresponding theorems of the alternative have been proposed for semidefinite programming; see \cite{RamanaMR1461379} for Ramana's ``extended Lagrange-Slater dual'', \cite{ramanatuncelwolkowiczMR1462059}, \cite{klepschweigMR3092548} for Klep and Schweighofer's SOS Dual, as well as \cite{pataki2013strong,liu2018exact}. They have in common that they are defined over the ground field, show no duality gap, and can be written down in polynomial-time with respect to the input size.

The facial reduction method proposed by Borwein and Wolkowicz \cite{borwein1981facial} can also be used to regularize weakly feasible semidefinite programs so that the dual program has no duality gap. Approximate Farkas Lemmas that can deal with weakly feasible programs have been proposed in \cite{polik2009new}. Ramana's ELSD is a central tool in \cite{MR3560635} to study certificates of weak infeasibility in the context of semidefinite programs. Waki and Muramatsu found finite certificates of infeasibility for semidefinite programs in \cite{wakiMuramatsuMR3063940} depending crucially on the objective function of the related program. Our certificates do not depend on the optimization criterion and hold for general conic programs.

Epelman and Freund study in \cite{epelman2000condition} the conic feasibility problem and derive a decision algorithm of essentially quadratic complexity in the condition number of the problem: such condition goes to infinity if the ``distance to ill-posedness'' of the program goes to zero, hence it cannot be directly applied to weakly feasible or weakly infeasible programs. Moreover, in this paper we describe a class of strongly infeasible programs for which the Epelman-Freund algorithm cannot be applied.

The idea of ``embedding'' the starting system in a larger one for which good properties are guaranteed is the central feature of algorithms of type {\it homogeneous self-dual embedding}, for which a large literature is available, see {\it e.g.} \cite{ye1994nl}. Our technique is purely geometrical in nature, it relies on an abstract but natural lifting of the feasible cone and in this sense it consists in a homogeneous embedding; but contrarily to the classical one it is targeted to the feasibility rather than to the optimization problem, that is, it does not depend on the linear objective function.

%%%leads to a second conic program whose feasibility type can give information on the feasibility of the original program.

%Other comparisons / discussions to be added :
%\begin{itemize}
%\item \cite{renegar1994possible} : this is a very abstract paper about semialgebraic feasibility in the context of first order theory of the reals. We should say something about the ``impossibility theorem'' (Theorem 1.4.1 in this paper).
%\end{itemize}

\section{Homogenization of the general conic program} \label{sec:homog}
We first discuss basics of conic programming and convex separation before presenting homogenization in the context of general conic programming.

\subsection{Feasibility types} \label{ssec:types}
A set $K \subset \R^n$ is a {\it cone} if it is closed under multiplication by nonnegative scalars, and it is called {pointed} if it does not contain lines or equivalently if $K \cap (-K) = \{0\}$. A closed pointed cone with non-empty interior is called {\it regular}. In this section, we are interested in the feasibility of affine sections of regular cones in finite-dimensional real vector spaces. The dual vector space of a vector space $V$ is denoted by $V^\ast$, and the dual cone of a cone $K$ is denoted by
$$
K^\vee = \{ \ell \in V^\ast \mymid \forall \, x \in K \,\, \ell(x) \geq 0\}.
$$
Let $K \subset \R^n$ be a regular convex cone, and let $L \subset \R^n$ be an affine subspace of dimension $d$. A {\it (linear) conic programming problem} is given by
\begin{equation}
\label{CPp}
  \inf \, \ell(x) \,\,\,\, \text{s.t.} \,\,\, x \in K \cap L.
\end{equation}
The intersection $K \cap L$ is called the {\it feasible set}, and the objective function $\ell(x)$ is linear. We denote by $\tint(K)$ the Euclidean interior of $K$, and by $\dist(A,B) = \inf\{ \|x-v\| \colon x\in A, v\in B\}$ the Euclidean distance between two sets $A,B$. Generally speaking, there can exist different shades of feasibility for the feasible set of Problem \eqref{CPp}.

\begin{defi}
  \label{def:feas}
    We say that $K \cap L$ (or, equivalently, Problem \eqref{CPp}) is
  \begin{enumerate}
  \item
    \emph{feasible} if $K\cap L$ is non-empty. In particular it is
    \begin{enumerate}
    \item
      \emph{strongly feasible} if $\tint(K)\cap L \neq \emptyset$.
    \item
      \emph{weakly feasible} if it is feasible and $\tint(K)\cap L = \emptyset$.
    \end{enumerate}
  \item
     \emph{infeasible} if $K\cap L = \emptyset$.
    \begin{enumerate}
    \item
      \emph{strongly infeasible} if $\dist(K,L) > 0$.
    \item
      \emph{weakly infeasible} if it is infeasible but not strongly infeasible.
\end{enumerate}
  \end{enumerate}
  We call any of the previous four subcases the \emph{feasibility type} of $K\cap L$.
\end{defi}

Remark that the Euclidean distance $\dist(\cdot,\cdot)$ cannot distinguish between feasible and infeasible types, indeed $\dist(K,L) = 0$ for both feasible (weak or strong) and weak infeasible conic programs. In Section \ref{ssec:stable} we describe a class of strongly infeasible programs (hence satisfying $\dist(K,L) > 0$) showing the same numerical instabilities of weak programs.

We recall that in the case of linear programming, that is when $L=\{x \in \R^n \mymid Ax=b\}$ is an affine space and $K=(\R_{+})^n \coloneqq \{x \in \R^n \mymid x_i \geq 0, \forall \, i = 1, \ldots, n\}$ is the positive orthant, Farkas Lemma \cite{Farkas} implies that $K \cap L$ is infeasible if and only if it is strongly infeasible. In other words, by Farkas Lemma, there are only three feasibility types in linear programming, pictured in Figure \ref{fig:LP}.

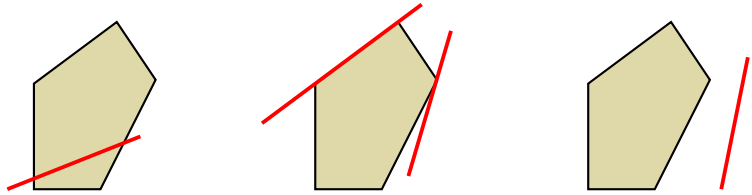
\begin{figure}[!ht]
  \vspace{-3cm}
  \centering
  \hspace{0.8cm}
   \begin{tikzpicture}[scale=0.35]
       \tkzDefPoint(0,0){O} 
       \tkzFct[color   = red,domain =0:10,samples=2]{4+0.75*x}
       \tkzDefPointByFct(0) \tkzGetPoint{A1} 
       \tkzDefPointByFct(10) \tkzGetPoint{B1} 
       \tkzFct[color   = blue,domain =0:10,samples=2]{11-1.5*x}
       \tkzDefPointByFct(0) \tkzGetPoint{A2} 
       \tkzDefPointByFct(10) \tkzGetPoint{B2}  
       \tkzFct[color   = green,domain =0:10,samples=2]{2*x-5}
       \tkzDefPointByFct(2.5) \tkzGetPoint{A3} 
       \tkzDefPointByFct(10) \tkzGetPoint{B3} 
       \tkzInterLL(A1,B1)(A2,B2) \tkzGetPoint{I12} 
       \tkzInterLL(A1,B1)(A3,B3) \tkzGetPoint{I13}
       \tkzInterLL(A3,B3)(A2,B2) \tkzGetPoint{I23}
       \tkzFillPolygon[color=olive!30](O,A3,I23,I12,A1)
       \tkzDrawPolygon[color=black,thick](O,A3,I23,I12,A1)
       \draw[line width=0.5mm, red] (-1,0) -- (4,2);
   \end{tikzpicture}
\hspace{-0.8cm}
   \begin{tikzpicture}[scale=0.35]
       \tkzDefPoint(0,0){O} 
       \tkzFct[color   = red,domain =0:10,samples=2]{4+0.75*x}
       \tkzDefPointByFct(0) \tkzGetPoint{A1} 
       \tkzDefPointByFct(10) \tkzGetPoint{B1} 
       \tkzFct[color   = blue,domain =0:10,samples=2]{11-1.5*x}
       \tkzDefPointByFct(0) \tkzGetPoint{A2} 
       \tkzDefPointByFct(10) \tkzGetPoint{B2}  
       \tkzFct[color   = green,domain =0:10,samples=2]{2*x-5}
       \tkzDefPointByFct(2.5) \tkzGetPoint{A3} 
       \tkzDefPointByFct(10) \tkzGetPoint{B3} 
       \tkzInterLL(A1,B1)(A2,B2) \tkzGetPoint{I12} 
       \tkzInterLL(A1,B1)(A3,B3) \tkzGetPoint{I13}
       \tkzInterLL(A3,B3)(A2,B2) \tkzGetPoint{I23}
       \tkzFillPolygon[color=olive!30](O,A3,I23,I12,A1)
       \tkzDrawPolygon[color=black,thick](O,A3,I23,I12,A1)
       \draw[line width=0.5mm, red] (-2,2.5) -- (4,7);
       \draw[line width=0.5mm, red] (3.5,0.5) -- (5.1,6);
   \end{tikzpicture}
\hspace{-0.2cm}
   \begin{tikzpicture}[scale=0.35]
       \tkzDefPoint(0,0){O} 
       \tkzFct[color   = red,domain =0:10,samples=2]{4+0.75*x}
       \tkzDefPointByFct(0) \tkzGetPoint{A1} 
       \tkzDefPointByFct(10) \tkzGetPoint{B1} 
       \tkzFct[color   = blue,domain =0:10,samples=2]{11-1.5*x}
       \tkzDefPointByFct(0) \tkzGetPoint{A2} 
       \tkzDefPointByFct(10) \tkzGetPoint{B2}  
       \tkzFct[color   = green,domain =0:10,samples=2]{2*x-5}
       \tkzDefPointByFct(2.5) \tkzGetPoint{A3} 
       \tkzDefPointByFct(10) \tkzGetPoint{B3} 
       \tkzInterLL(A1,B1)(A2,B2) \tkzGetPoint{I12} 
       \tkzInterLL(A1,B1)(A3,B3) \tkzGetPoint{I13}
       \tkzInterLL(A3,B3)(A2,B2) \tkzGetPoint{I23}
       \tkzFillPolygon[color=olive!30](O,A3,I23,I12,A1)
       \tkzDrawPolygon[color=black,thick](O,A3,I23,I12,A1)
       \draw[line width=0.5mm, red] (5,0) -- (6,5);
   \end{tikzpicture}
   \vspace{-1.3cm}
   \caption{Feasibility types in Linear Programming: strong feasibility, weak feasibility and infeasibility}\label{fig:LP}
\end{figure}

In other words, the vector $y$ in the Farkas alternative $\{A^Ty \geq 0, y^Tb<0\}$ is an \emph{infeasibility certificate}, or \emph{improving ray}, and corresponds geometrically to a linear functional {strongly separating $b$ from the cone generated by the columns of $A$}, according to the following definition.

\begin{defi}\label{def:separation}
Let $A,B\subset V$ be two sets and let $H = \{x\in V \colon \ell(x) = \lambda\} \subset V$ be an affine hyperplane defined by a linear functional $\ell\in V^*$ and $\lambda\in\R$. We say that the affine hyperplane $H$ \emph{strongly separates} $A$ and $B$ if $\sup\{\ell(x)\colon x\in A\} < \lambda$ and $\lambda < \inf\{\ell(y)\colon y\in B\}$.
\end{defi}

For a general conic programming problem, the natural generalization of Farkas Lemma fails dramatically. Indeed, a second shade of infeasibility as highlighted in Definition~\ref{def:feas} might occur, namely weak infeasibility (Figure \ref{fig:CP}, third picture), for which the existence of improving rays is not guaranteed.

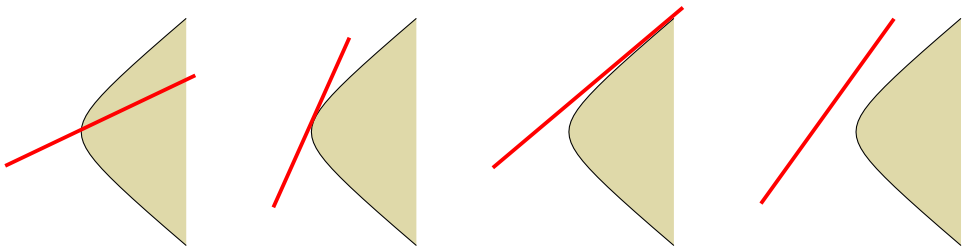
\begin{figure}[!ht]
  \centering
   \begin{tikzpicture}[scale=0.5]
    \pgfmathsetmacro{\e}{1.3}   % eccentricity
    \pgfmathsetmacro{\a}{1}
    \pgfmathsetmacro{\b}{(\a*sqrt((\e)^2-1)} 
    \draw[fill=olive!30] plot[domain=-2:2,ultra thick] ({\a*cosh(\x)},{\b*sinh(\x)});
    \draw[line width=0.5mm, red] (-1,-0.9) -- (4,1.5);
    \fill[color=olive];
   \end{tikzpicture}
\hspace{0.7cm}
   \begin{tikzpicture}[scale=0.5]
    \pgfmathsetmacro{\e}{1.3}   % eccentricity
    \pgfmathsetmacro{\a}{1}
    \pgfmathsetmacro{\b}{(\a*sqrt((\e)^2-1)} 
    \draw[fill=olive!30] plot[domain=-2:2, ultra thick] ({\a*cosh(\x)},{\b*sinh(\x)});
    \draw[line width=0.5mm, red] (0,-2) -- (2,2.5);
   \end{tikzpicture}
\hspace{0.7cm}
   \begin{tikzpicture}[scale=0.5]
    \pgfmathsetmacro{\e}{1.3}   % eccentricity
    \pgfmathsetmacro{\a}{1}
    \pgfmathsetmacro{\b}{(\a*sqrt((\e)^2-1)} 
    \draw[fill=olive!30] plot[domain=-2:2, ultra thick] ({\a*cosh(\x)},{\b*sinh(\x)});
    \draw[line width=0.5mm, red] (-1,-0.95) -- (4,3.3);
   \end{tikzpicture}
\hspace{0.7cm}
   \begin{tikzpicture}[scale=0.5]
    \pgfmathsetmacro{\e}{1.3}   % eccentricity
    \pgfmathsetmacro{\a}{1}
    \pgfmathsetmacro{\b}{(\a*sqrt((\e)^2-1)} 
    \draw[fill=olive!30] plot[domain=-2:2, ultra thick] ({\a*cosh(\x)},{\b*sinh(\x)});
    \draw[line width=0.5mm, red] (-1.5,-1.9) -- (2,3);
   \end{tikzpicture}
   \caption{Feasibility types in conic programming: strong feasibility, weak feasibility, weak infeasibility and strong infeasibility}\label{fig:CP}
\end{figure}

In order to study the feasibility types of general regular cones $K \subset \R^n$, one can use the following characterization of strong infeasibility, stating that it is equivalent to the existence of strongly separating hyperplanes (as in Definition \ref{def:separation}). We give an easy proof based on separation arguments, for the sake of completeness.

\begin{Thm}[Strong separation, {\cite[Theorem~11.4]{rockafellarMR0274683}}]
  \label{lem:strictSep}
  Let $V$ be a real normed vector space, and let $A,B \subset V$ be closed convex sets.
  There is an affine hyperplane $H \subset V$ strongly separating $A$ from $B$ if and only if $\dist(A,B)>0$.
\end{Thm}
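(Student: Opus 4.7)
My plan is to prove the two implications separately; both reduce to routine arguments once set up correctly.

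For the forward direction ($\Rightarrow$), I would start from a hyperplane $H = \{x\in V : \ell(x)=\lambda\}$ strongly separating $A$ and $B$, so that $\alpha := \sup_{a\in A}\ell(a) < \lambda < \inf_{b\in B}\ell(b) =: \beta$. The strict inequalities force $\alpha,\beta$ to be finite and $\beta - \alpha > 0$. For every $a\in A$ and $b\in B$, $\ell(b-a) = \ell(b)-\ell(a) \geq \beta - \alpha$, and since $\ell$ is a continuous linear functional with operator norm $\|\ell\|$, one obtains $\|b - a\| \geq (\beta - \alpha)/\|\ell\|$. Taking the infimum over $a\in A, b\in B$ gives $\dist(A,B) \geq (\beta-\alpha)/\|\ell\| > 0$.

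For the reverse direction ($\Leftarrow$), I would use the standard Minkowski-difference trick. Set $C := A - B = \{a - b : a\in A,\, b\in B\}$. Then $C$ is convex (as the Minkowski sum of two convex sets), and by the very definition of the distance, $\dist(0,C) = \dist(A,B) > 0$. Because the distance from a point to a set equals the distance to its closure, we also have $\dist(0,\overline{C}) > 0$, so $\overline{C}$ is a closed convex set avoiding the origin. Applying the geometric Hahn--Banach theorem (in finite dimensions, the elementary strict separation of a closed convex set from an exterior point) produces a nonzero functional $\ell \in V^*$ and a number $\mu > 0$ with $\ell(c) \geq \mu$ for every $c \in \overline{C}$. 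Rewriting this as $\ell(a) - \ell(b) \geq \mu$ for all $a \in A$, $b \in B$ gives $\sup_A \ell \leq (\inf_B \ell) - \mu$, and choosing $\lambda := (\inf_B \ell) - \mu/2$ yields a hyperplane $H = \{\ell = \lambda\}$ that strongly separates $A$ and $B$ in the sense of Definition~\ref{def:separation}.

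The only subtlety worth flagging is that $A - B$ need not be closed even when $A$ and $B$ are (e.g.\ two disjoint asymptotic branches of a hyperbola and a line), so one cannot separate $0$ from $C$ directly. However, the positive distance hypothesis transfers verbatim to $\overline{C}$, and that is exactly the hypothesis needed to apply strict separation of a point from a closed convex set. This is the only place where the assumption $\dist(A,B) > 0$ is truly used; mere disjointness would not suffice, as illustrated precisely by the weakly infeasible case in Figure~\ref{fig:CP}.
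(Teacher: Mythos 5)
Your proof is correct, and while the forward implication coincides with the paper's (both bound $\|b-a\|\geq(\beta-\alpha)/\norm{\ell}$ via the dual norm of $\ell$), your reverse implication takes a genuinely different route. The paper fattens both sets, replacing $A$ and $B$ by $A+\B_\varepsilon$ and $B+\B_\varepsilon$ with $\varepsilon=\dist(A,B)/3$, applies the separation theorem to these two disjoint convex sets, and then uses the slack $\varepsilon\norm{\ell}$ coming from the balls to upgrade plain separation to strong separation. You instead form the Minkowski difference $C=A-B$, observe that $\dist(0,\ol{C})=\dist(A,B)>0$, and strictly separate the origin from the closed convex set $\ol{C}$. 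Both arguments are sound and both correctly confront the same obstruction, namely that $A-B$ need not be closed (your hyperbola-versus-asymptote example is exactly the paper's weakly infeasible picture); you resolve it by passing to the closure, where the positive-distance hypothesis persists, while the paper never forms the difference set at all. What your version buys is that the only separation black box needed is the most elementary one --- a point at positive distance from a closed convex set --- whereas the paper invokes separation of two disjoint convex sets; what the paper's version buys is that it avoids introducing $\ol{C}$ and reads off the strong-separation constants directly from $\varepsilon$ and $\norm{\ell}$. One cosmetic slip: from $\ell(a)-\ell(b)\geq\mu$ for all $a\in A$, $b\in B$ you should conclude $\sup_{B}\ell\leq\inf_{A}\ell-\mu$ rather than $\sup_{A}\ell\leq\inf_{B}\ell-\mu$; replacing $\ell$ by $-\ell$ restores the orientation required by Definition~\ref{def:separation}, so nothing is lost.
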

\begin{proof}
  Let $\ell\in V^\ast$ be a linear form on $V$ and suppose that $H=\{x \in V \mymid \ell(x) = \lambda\}$ strongly separates $A$ from $B$. Write $a = \sup_{x \in A} \ell(x)$ and $b = \inf_{y \in B} \ell(y)$, so that $a<b$. Then for every $x \in A$ and $y \in B$ we get from Cauchy-Schwartz inequality that $\norm{\ell}\cdot \dist(x,y) = \norm{\ell}\cdot\norm{x-y} \geq \norm{\ell(x)-\ell(y)} \geq \abs{a-b}$, that is $\dist(A,B) \geq (b-a)/\norm{\ell}$. 

Conversely,  assume that $\dist(A,B)>0$. Then there is $\varepsilon > 0$ such that $\dist(A+\B_\varepsilon,B+\B_\varepsilon)>0$, where $\B_\varepsilon$ denotes the ball of radius $\varepsilon$ around the origin (for example, take $\epsilon = \dist(A,B)/3$).  The sets $A+\B_\varepsilon$ and $B+\B_\varepsilon$ are again convex as Minkowski sums of convex sets. By the separation theorem \cite[Ch.III, Th.1.2]{barvinok}, there exists an affine hyperplane $H = \{x \in V \mymid \ell(x)=\lambda\} \subset V$ separating $A+\B_\varepsilon$ and $B+\B_\varepsilon$, that is $A+\B_\varepsilon  \subset H^\leq = \{x \in V \mymid \ell(x) \leq \lambda\}$ and $B +\B_\varepsilon \subset H^\geq =  \{x \in V \mymid \ell(x) \geq \lambda\}$. Since $\varepsilon > 0$, and by convexity of $A,A+\B_\varepsilon,B,B+\B_\varepsilon$ we get that  $$ \sup\{\ell(x)\colon x\in A\} < \lambda < \inf\{\ell(x)\colon x\in B\} $$ which guarantees the separation.
\end{proof}

\subsection{Homogenization} \label{ssec:homog}
We introduce now a point of view from projective geometry on the feasibility problem in conic programming. Let us consider the following setup for the rest of the section. \\

{\it Homogeneous setup}.
\label{homsetup}
Suppose that $K$ is a regular cone in a finite-dimensional real vector space $V$ (in particular, the dimension of $K$ as a cone is equal to $\dim V$). Let $L \subset V$ be an affine subspace of dimension $n$ and assume that $\dim(V)\geq n+2$. The input feasibility problem is to determine if $K \cap L$ is empty or not. We can view this problem from the point of view of projective geometry because $\codim(L)\geq 2$ so that $L$ is contained in a proper affine hyperplane $U\subset V$ (proper meaning that $0\notin U$). This affine hyperplane gives an affine chart of $\P(V)$ and $K\cap U$ is what we see in this chart. We discuss how the conic feasibility problem $K\cap L$ relates to the feasibility problem in the affine chart, where it reads $(K\cap U)\cap L$.
This point of view suggests that we should study the intersection \enquote{at infinity}: We write $\lin(L)$ for the unique linear space $L-v_0$ given by any choice of $v_0\in L$ and look at $K\cap \lin(L)$, which is contained in $K\cap \lin(U)$ -- the part of $K$ that is at infinity with respect to the affine chart $U$.
We study this by passing to the linear span of $L$ denoted by $\wh{L}$ and considering the feasibility problem $K\cap \wh{L}$ and its relation to the original $K\cap L$. \\

Let us first describe this setup in the setting of linear programming. In this case, the above homogeneous form can always be achieved for a linear program in equational form by simply homogenizing the linear constraints in the usual way. Concretely, let $L = \{x \in \R^n \mymid Ax = b\}$ for an $m \times n$ matrix $A$ of rank $m$ (say $m\geq 1$) so that the feasible set of the linear program is the intersection of $L$ with the nonnegative orthant. Let us add a new variable $x_0$ and consider $L$ as the set of solutions of the homogeneous system $Ax = x_0 b$ in $n+1$ variables with the property that $x_0 = 1$. So in this case, $V=\R^{n+1}$, the affine space $L$ has codimension at least $2$ and we have singled out the proper affine hyperplane $U = \{(x_0,x) \in V : x_0 = 1\}$ in $V$, and $K = (\R_{+})^{n+1}$ is the nonnegative orthant in $\R^{n+1}$.
%The apex of the cone $\wh{K}\cap U$ is not the origin in $V$ (rather it is $(1,0,0,\ldots,0)$). After appropriately choosing affine coordinates on $U$, it is a cone in the usual sense.

%\begin{figure}
%\begin{tikzpicture}
%  % cone variables
%  \def\x{2.0}
%  \def\y{4.0}
%  \def\R{\x}
%  \def\yc{\y+0.02}
%  \def\e{0.4} 
%  % cone shades + frame
%  \shade[right color=white,left color=lightgreen,opacity=0.3]
%    (-\x,\yc) -- (-2,4) arc (180:360:{\R} and \e) -- (\x,\yc) -- (0,0) -- cycle;
%  \draw[fill=lightgreen3,opacity=0.2]
%    (0,\yc) circle ({\R} and \e);
%  \draw
%    (-\x,\y) -- (0,0) -- (\x,\y);
%  \draw
%    (0,\yc) circle ({\R} and \e);
%  % tracks
% % \draw[thick]
% %   (0,0) arc (320:360:-3 and 6.0); %node[above] {1};
% % \draw[thick]
% %   (0,0) arc (-70:  0:0.8 and 3.5); %node[above] {2};
% % \draw[thick]
% %   (0,0) arc (  0: 70:0.9 and 4.5); %node[above] {3};
% % \draw[thick]
% %   (0,0) arc (180:140:2 and 6.0); %node[above] {4};
% % \draw[thick,dashed]
% %   (0,0) -- (1,4.6);
%\end{tikzpicture}
%\end{figure}

Using homogenization in this sense, we can conveniently characterize infeasibility of the general conic program. 

\begin{Prop}\label{prop:feasibleandinfty}
  Let $L\subset V$ be a proper affine subspace with $\codim(L)\geq 2$.
  Then $K\cap L$ and $(-K)\cap L$ are infeasible if and only if $K\cap \wh{L}$ is contained in $\lin(L)$. 
\end{Prop}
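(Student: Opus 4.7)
The plan is to reduce the statement to a clean linear-algebra dichotomy on $\widehat{L}$ by choosing a single linear form that distinguishes $L$, $-L$, and $\lin(L)$ inside $\widehat{L}$.

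First I would set up coordinates on $\widehat{L}$. Since $L$ is a proper affine subspace (i.e.\ $0 \notin L$) and $\widehat{L} = \lspan(L)$, picking any $v_0 \in L$ gives a direct sum decomposition $\widehat{L} = \R v_0 \oplus \lin(L)$, so $\dim \widehat{L} = \dim L + 1$. In particular, there is a (unique) nonzero linear functional $\ell \in \widehat{L}^\ast$ with
\[
L = \{x \in \widehat{L} \mymid \ell(x) = 1\}, \qquad \lin(L) = \ker\bigl(\ell|_{\widehat{L}}\bigr).
\]
With this in hand, the condition $K \cap \widehat{L} \subset \lin(L)$ translates to: every $x \in K \cap \widehat{L}$ satisfies $\ell(x) = 0$, i.e.\ neither the open half-space $\{\ell > 0\}$ nor $\{\ell < 0\}$ in $\widehat{L}$ meets $K$.

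The core of the proof is then a rescaling argument identifying these two half-spaces with $L$ and $-L$ respectively. If $x \in K \cap \widehat{L}$ with $\ell(x) > 0$, then $y = x / \ell(x) \in K$ (positive scalar), $y \in \widehat{L}$, and $\ell(y) = 1$, hence $y \in K \cap L$. Conversely, any $y \in K \cap L$ already lies in $K \cap \widehat{L}$ with $\ell(y) = 1 > 0$. Symmetrically, if $x \in K \cap \widehat{L}$ with $\ell(x) < 0$, then $x/\ell(x)$ is $x$ scaled by a negative number, so it lies in $-K$ and again in $L$, giving a point of $(-K) \cap L$; the converse is immediate since $y \in (-K) \cap L$ yields $-y \in K \cap \widehat{L}$ with $\ell(-y) = -1 < 0$.

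Combining these two equivalences finishes the argument: $K \cap \widehat{L} \subset \lin(L)$ iff the sign of $\ell$ is identically zero on $K \cap \widehat{L}$, iff $K \cap L = \emptyset$ and $(-K) \cap L = \emptyset$. No genuine obstacle arises; the only thing one has to be slightly careful about is that the hypothesis $0 \notin L$ (contained in \emph{proper}) is what makes $v_0$ linearly independent from $\lin(L)$, and hence makes $\ell$ well-defined with the stated properties. The codimension hypothesis $\codim(L) \geq 2$ is not needed for this proposition itself; it is only used to place things in the projective-geometry framework discussed before the statement.
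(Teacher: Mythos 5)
Your proof is correct and is essentially the same argument as the paper's: both rest on the observation that a point of $K\cap\wh{L}$ outside $\lin(L)$ can be rescaled by a nonzero $\lambda$ into $L$, landing in $K\cap L$ or $(-K)\cap L$ according to the sign of $\lambda$, while points of $K\cap L$ or $(-K)\cap L$ obviously give points of $K\cap\wh{L}$ off $\lin(L)$. Your explicit linear functional $\ell$ with $L=\{\ell=1\}$ and $\lin(L)=\ker\ell$ is just a cleaner bookkeeping device for the same rescaling, and your side remark that $\codim(L)\geq 2$ is not actually used here is accurate.
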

\begin{proof}
This follows from two simple facts: first, $K \cap L \subset K \cap \wh{L}$ and second, $L\cap \lin(L) = \emptyset$. Indeed, if $K \cap L$ is feasible, say $x \in K \cap L$, then $x \in K\cap \wh{L}$ and $x \not\in \lin(L)$. Similarly, if $(-K)\cap L$ is feasible, there is an $x\in (-K)\cap L$ that is not in $\lin(L)$. So $-x$ is in $K\cap \wh{L}$ and not in $\lin(L)$. For the reverse implication, suppose $\wt{x}\in K\cap \wh{L}$, with $\wt{x} \notin \lin(L)$. Then there is a $\lambda\in \R^\ast$ such that $\lambda \wt{x}\in L$ so that $\lambda\wt{x}\in L$. So, if $\lambda$ is positive, then $K\cap L$ is feasible; otherwise, $(-K)\cap L$ is feasible.
\end{proof}

The only implications for the feasibility types of $K\cap L$ and $K\cap \wh{L}$ that hold in the general setup of conic programming are summarized in the following statement.

\begin{Thm}\label{thm:homogenizationmain}
Let $V$ be a finite-dimensional Euclidean space and let $K\subset V$ be a regular cone. Let $L\subset V$ be a proper affine subspace and $\wh{L}$ the span of $L$ in $V$. 
The following holds:
\begin{enumerate}
\item $K \cap \wh{L}$ is strongly feasible if and only if $K \cap L$ or $(-K)\cap L$ is strongly feasible.
\item If $K \cap \wh{L} = \{0\}$, then $K \cap L$ is strongly infeasible.
\end{enumerate}
\end{Thm}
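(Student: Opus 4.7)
The plan is to prove the two parts separately, using the following guiding observations. For part (1), I will exploit the affine chart from the homogeneous setup: pick a proper affine hyperplane $U \supseteq L$ defined by a linear form $\ell_U$ with $U = \{v\in V : \ell_U(v) = 1\}$. Any $y \in \wh L$ with $\ell_U(y) \neq 0$ can be rescaled to $y/\ell_U(y) \in L$, because $\wh L$ is the set of $\R$-linear combinations of elements of $L$, and those with coefficient-sum one lie in the affine space $L$. For part (2), I will use a compactness and normalization argument, exploiting crucially that $\wh L$ is closed and that $0 \notin L$ (as $L$ is proper).

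For part (1), the reverse implication is immediate: if $x\in \tint(K)\cap L$ then $x\in \tint(K)\cap\wh L$; if $x\in \tint(-K)\cap L$ then $-x\in \tint(K)$ and $-x\in \wh L$ because $\wh L$ is a linear subspace containing $L$. For the forward direction, fix $y\in \tint(K)\cap\wh L$. If $\ell_U(y)>0$, then $y/\ell_U(y)\in L$ and remains in $\tint(K)$ by positive homogeneity, so $K\cap L$ is strongly feasible; if $\ell_U(y)<0$, the analogous rescaling witnesses strong feasibility of $(-K)\cap L$. The delicate case is $\ell_U(y)=0$: here $L \subseteq U$ is disjoint from $\lin(U)=\ker \ell_U$, so $\wh L \not\subseteq \ker\ell_U$, and I may pick $v\in \wh L$ with $\ell_U(v)\neq 0$. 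For small enough $\varepsilon >0$, the perturbed point $y+\varepsilon v$ still lies in the open set $\tint(K)$ and in $\wh L$, now with nonzero $\ell_U$-value, reducing to the previous cases.

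For part (2), suppose for contradiction that $\dist(K,L)=0$, and pick sequences $x_n\in K$, $y_n\in L$ with $\norm{x_n-y_n}\to 0$. If $\{x_n\}$ is bounded, a convergent subsequence $x_n\to x$ in $K$ forces $y_n\to x$ as well, hence $x\in L$ by closedness, so $x\in K\cap L\subseteq K\cap\wh L=\{0\}$, contradicting $0\notin L$. Otherwise, pass to a subsequence with $\norm{x_n}\to\infty$ and normalize: $u_n:=x_n/\norm{x_n}\in K$ subconverges to a unit vector $u\in K$, and $y_n/\norm{x_n}=u_n+(y_n-x_n)/\norm{x_n}\to u$, with each term in the linear subspace $\wh L$, so $u\in \wh L$ by closedness of $\wh L$. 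This produces a nonzero element of $K\cap \wh L$, contradicting the hypothesis.

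The principal subtlety is the perturbation step in part (1) handling $\ell_U(y)=0$; without it one would only recover that $\tint(K)\cap \lin(L)\neq\emptyset$, which does not by itself yield strong feasibility in $L$ or $-L$. The rest is clean convex-analytic bookkeeping about closed sets and positively homogeneous interiors.
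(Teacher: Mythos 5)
Your proof is correct and follows essentially the same route as the paper: part (1) by rescaling a point of $\tint(K)\cap\wh L$ that is moved off $\lin(L)$ (your explicit $\varepsilon$-perturbation via $\ell_U$ is just a spelled-out version of the paper's choice of $x\in\tint(K)\cap\wh L$ with $x\notin\lin(L)$), and part (2) by normalizing sequences witnessing $\dist(K,L)=0$ and extracting a nonzero limit in $K\cap\wh L$ by compactness. The only cosmetic difference is that you split the sequence argument into bounded and unbounded cases, whereas the paper normalizes both sequences uniformly; both are valid.
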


The proof reduces to the following two lemmas.
\begin{Lem}\label{lem:homogenization(1)}
  If $K \cap \wh{L}$ is strongly feasible, then $K \cap L$ or $(-K)\cap L$ is strongly feasible.
\end{Lem}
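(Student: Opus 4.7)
The plan is to exploit the decomposition $\wh{L} = \R v_0 + \lin(L)$ obtained by fixing any $v_0 \in L$. Starting from a point $x \in \tint(K) \cap \wh{L}$, I will split into cases according to the ``affine coordinate'' of $x$ along $v_0$.

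First I would choose $v_0 \in L$ and write $x = \alpha v_0 + w$ with $\alpha \in \R$ and $w \in \lin(L) = L - v_0$. In the easy case $\alpha \neq 0$, the rescaled point $y = x/\alpha = v_0 + w/\alpha$ lies in $L$. Since $\tint(K)$ is closed under multiplication by nonzero scalars (positive for $K$, negative for $-K$), we get $y \in \tint(K) \cap L$ when $\alpha > 0$ and $y \in \tint(-K) \cap L$ when $\alpha < 0$, giving strong feasibility of $K\cap L$ or $(-K)\cap L$ respectively.

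The main subtlety is the remaining case $\alpha = 0$, in which $x = w \in \lin(L)$ is not a scalar multiple of any element of $L$. To handle this I will ``tilt'' $x$ using $v_0$: consider the family $y_t = v_0 + t x$ for $t > 0$, which lies in $L$ because $tx \in \lin(L)$. Writing $y_t = t(v_0/t + x)$, as $t \to \infty$ the point $v_0/t + x$ converges to $x \in \tint(K)$, so by openness of $\tint(K)$ it belongs to $\tint(K)$ for all sufficiently large $t$. Multiplying by the positive scalar $t$ keeps it in $\tint(K)$, so $y_t \in \tint(K) \cap L$ and $K \cap L$ is strongly feasible.

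The key geometric point the proof should make explicit is that $\wh{L}$ decomposes into the ``affine slice at height one'' (positive and negative multiples of elements of $L$) together with the hyperplane at infinity $\lin(L)$, and that the interior of a cone, being open and scaling-invariant, witnesses strong feasibility in either layer — directly in the first layer, and after combining with any finite point of $L$ in the second.
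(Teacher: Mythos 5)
Your proof is correct and takes essentially the same route as the paper's: obtain a point of $\tint(K)\cap\wh{L}$ lying off $\lin(L)$ and rescale it into $L$ or $-L$. Your case $\alpha=0$ simply makes explicit the step the paper leaves implicit, namely that the relatively open set $\tint(K)\cap\wh{L}$ cannot be contained in the proper hyperplane $\lin(L)$ of $\wh{L}$, which you verify concretely by tilting $x$ to $v_0/t+x$ for large $t$.
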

\begin{proof}
If $\wh{L}$ intersects the interior of $K$, then there exists an $x\in\tint(K)\cap \wh{L}$ with $x\notin \lin(L)$. This element can be rescaled such that it lies in $L$. Depending on the sign of the scaling factor, this gives an interior point of $K\cap L$ or $(-K)\cap L$.
\end{proof}

\begin{Lem}\label{lem:homogenization(2)}
If $K\cap L$ is weakly infeasible, then $K\cap \wh{L}$ contains a non-zero vector.
\end{Lem}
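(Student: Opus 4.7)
The plan is to exploit the hypothesis $\dist(K,L)=0$ together with $K\cap L=\emptyset$ to extract a ``direction at infinity'' lying in both $K$ and $\lin(L)\subset\wh{L}$. Since $K\cap L$ is weakly infeasible, there exist sequences $(x_n)\subset K$ and $(y_n)\subset L$ with $\|x_n-y_n\|\to 0$. I would first rule out the bounded case: if $(x_n)$ were bounded, I could pass to a subsequence converging to some $x^\ast$; then $y_n\to x^\ast$ as well, and since both $K$ and $L$ are closed, $x^\ast\in K\cap L$, contradicting infeasibility. Therefore $\|x_n\|\to\infty$ along a subsequence.

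Next I would normalize and consider $u_n=x_n/\|x_n\|$, which lives in the unit sphere of $V$ and in $K$ (as $K$ is a cone). By compactness of the sphere in finite dimensions, a subsequence converges to a unit vector $u$, and $u\in K$ because $K$ is closed. Dividing $x_n-y_n\to 0$ by $\|x_n\|$ also yields $y_n/\|x_n\|\to u$.

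To conclude, I would write $y_n=v_0+\ell_n$ for some fixed $v_0\in L$ and $\ell_n\in\lin(L)$; then
\[
\frac{y_n}{\|x_n\|}=\frac{v_0}{\|x_n\|}+\frac{\ell_n}{\|x_n\|},
\]
and since $\|x_n\|\to\infty$ the first summand vanishes, so $\ell_n/\|x_n\|\to u$. Because $\lin(L)$ is a closed linear subspace, $u\in\lin(L)\subset\wh{L}$. Thus $u$ is a non-zero vector in $K\cap\wh{L}$, as desired.

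I do not expect any serious obstacle; the one subtle point worth stating carefully is why unboundedness is forced (closedness of $L$ and $K$ together with $K\cap L=\emptyset$), since this is exactly where the distinction between weak infeasibility and feasibility enters the argument.
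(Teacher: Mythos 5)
Your proof is correct, and it takes a route that differs from the one in the paper in an instructive way. The paper never makes the bounded/unbounded case distinction: it normalizes \emph{both} sequences, uses the properness of $L$ (i.e.\ $0\notin L$, which gives $\|w_n\|\geq\delta>0$) to control the ratio $\|v_n\|/\|w_n\|\to 1$, shows $\bigl\| v_n/\|v_n\| - w_n/\|w_n\| \bigr\|\to 0$, and concludes by compactness of $K\cap\bbS$ and $\wh{L}\cap\bbS$. Notably, that argument does not actually use $K\cap L=\emptyset$ at all, only $\dist(K,L)=0$ and $0\notin L$. You instead use infeasibility in an essential way: closedness of $K$ and $L$ together with $K\cap L=\emptyset$ forces $\|x_n\|\to\infty$, after which you normalize only the $K$-sequence and show the limit $u$ is a recession direction of $L$. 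In exchange you obtain a strictly sharper conclusion, namely a non-zero vector in $K\cap\lin(L)$ rather than merely in $K\cap\wh{L}$ (the paper's limit also lands there, but its proof does not assert this), and you do not need the hypothesis $0\notin L$. Both arguments are sound; the paper's is marginally more economical in avoiding the case split, while yours makes explicit exactly where weak infeasibility, as opposed to mere vanishing distance, enters, and produces the ``direction at infinity'' that matches the picture of Proposition~\ref{prop:feasibleandinfty}.
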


\begin{proof}
Fix a norm $\|.\|$ on $V$. Since $K\cap L$ is weakly infeasible, there exist sequences of points $(v_i)_{i\in\N}\subset K$ and $(w_j)_{j\in\N}\subset L$ such that $\|v_n-w_n\|$ goes to $0$ as $n$ goes to infinity because $\dist(K,L) = 0$.

By assumption we have that $0\notin L$. Therefore, there exists a $\delta$ such that $\|w_n\| > \delta$ for all $n\in \N$.
So we can estimate
\[
\left\|\frac{1}{\|v_n\|} v_n - \frac{1}{\|w_n\|} w_n \right\| = 
\frac{1}{\|w_n\|} \left \| \frac{\|w_n\|}{\|v_n\|} v_n - w_n \right \| \leq \frac{1}{\delta} \left\| \frac{\|w_n\|}{\|v_n\|} v_n - w_n \right \|,
\]
which goes to $0$, because $\|w_n\|/\|v_n\|$ goes to $1$. Indeed,
\[
\left| 1 - \frac{\|v_n\|}{\|w_n\|} \right| = \left| \frac{\|w_n\| - \|v_n\|}{\|w_n\|} \right| \leq \frac{1}{\delta}\|w_n-v_n\|.
\]
So the claim follows from the fact that $K\cap \bbS$ and $\wh{L}\cap \bbS$ are compact, where $\bbS = \{x\in V\mymid \|x\|=1\}$. Indeed, the sequences $(v_i/\|v_i\|)_{i\in\N}\subset\wh{L}\cap \bbS$ and $(w_j/\|w_j\|)_{j\in\N}\subset K\cap \bbS$ have convergent subsequences and their limits must be equal by the above computation. 
\end{proof}

This concludes the proof of Theorem~\ref{thm:homogenizationmain}.
\begin{proof}[Proof of Theorem~\ref{thm:homogenizationmain}]
If $K\cap L$ is strongly feasible, then clearly $\emptyset \neq \tint(K)\cap L\subset \tint(K)\cap \wh{L}$. So $K\cap \wh{L}$ is also strongly feasible. The same argument holds in case that $(-K)\cap L$ is strongly feasible. The other implication is Lemma~\ref{lem:homogenization(1)}. Claim (2) of the theorem follows from Lemma~\ref{lem:homogenization(2)} because $K\cap L\subset K\cap \wh{L}$.
\end{proof}

\subsection{Stable infeasibility} \label{ssec:stable}
In this last part of the section, we focus on the following subclass of (strongly) infeasible conic programs.
\begin{defi}\label{def:stablyinfeasible}
  Let $K$ be a cone and let $L$ be a $d-$dimensional affine space. We say that $K \cap L$ (or,
  equivalently, Problem \eqref{CPp}) is \emph{stably infeasible} if there exists an open
  neighborhood $N$ of $L$ in the Grassmannian of $d-$dimensional affine spaces in $\R^n$ such
  that $K \cap L'$ is infeasible for all $L' \in N$.
\end{defi}

From a numerical point of view, this definition means that the conic program $K\cap L$ remains infeasible under small perturbations of the affine space $L$. For another justification of the word stable in this context, see Corollary~\ref{cor:interiorsep} below.

It is easy to check that every stably infeasible conic program must be strongly infeasible,
but the converse is false even for linear programs. In the right picture of Figure \ref{fig:CP2}
the affine space $L$ is parallel to one of the ``asymptotes'' of the feasible set, hence arbitrary perturbations of $L$
may result both in feasible and infeasible programs. In other words, infeasible but unstable conic programs are infeasible
programs that are arbitrarily close to feasible ones (such as weakly infeasible ones).
In this sense, an unstable conic program belongs to the ``ill-posedness locus'' of the conic feasibility problem, hence its conditioning is infinite (and for instance the elementary algorithm in \cite{epelman2000condition} cannot be applied).

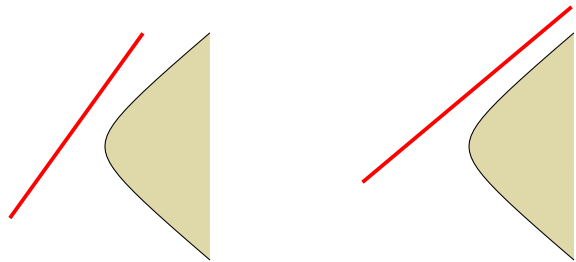
\begin{figure}[!ht]
  \centering
  \begin{tikzpicture}[scale=0.5]
    \pgfmathsetmacro{\e}{1.3}
    \pgfmathsetmacro{\a}{1}
    \pgfmathsetmacro{\b}{(\a*sqrt((\e)^2-1)} 
    \draw[fill=olive!30] plot[domain=-2:2, ultra thick] ({\a*cosh(\x)},{\b*sinh(\x)});
    \draw[line width=0.5mm, red] (-1.5,-1.9) -- (2,3);
  \end{tikzpicture}
  \hspace{1.7cm}
  \begin{tikzpicture}[scale=0.5]
    \pgfmathsetmacro{\e}{1.3}
    \pgfmathsetmacro{\a}{1}
    \pgfmathsetmacro{\b}{(\a*sqrt((\e)^2-1)} 
    \draw[fill=olive!30] plot[domain=-2:2, ultra thick] ({\a*cosh(\x)},{\b*sinh(\x)});
    \draw[line width=0.5mm, red] (-1.8,-0.95) -- (3.7,3.7);
  \end{tikzpicture}
  \caption{Stable and unstable infeasibility}\label{fig:CP2}
\end{figure}

Homogenization as described in this section distinguishes stably infeasible conic programs from not stably infeasible ones. 
\begin{Thm}\label{thm:stablyinfeasiblehom}
Let $V$ be a finite-dimensional Euclidean space and let $K\subset V$ be a regular cone.
%Let $U\subset V$ be an affine hyperplane with $0\notin U$, set $K = \wh{K}\cap U$ and suppose that $K$ is a regular cone (after appropriate choice of affine coordinates on $U$).
Let $L\subset V$ be a proper affine subspace with $\codim(L)\geq 2$ and $\wh{L}$ the span of $L$ in $V$. 
The conic programs $K\cap L$ and $(-K)\cap L$ are stably infeasible if and only if $K\cap \wh{L} = \{0\}$.
\end{Thm}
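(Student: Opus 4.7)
The plan is to deduce both directions from Theorem~\ref{thm:homogenizationmain}, using a compactness argument on the unit sphere for sufficiency and an explicit basis-level perturbation for necessity.

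For sufficiency, I assume $K\cap\wh{L}=\{0\}$ and fix a norm on $V$ with unit sphere $\bbS$. The compact sets $K\cap\bbS$ and $\wh{L}\cap\bbS$ are disjoint, since any common point would give a nonzero element of $K\cap\wh{L}$, so they lie at positive Euclidean distance $\delta>0$. For $L'$ in a sufficiently small neighborhood of $L$ in the affine Grassmannian, the linear span $\wh{L'}$ is close to $\wh{L}$ in the Grassmannian of $(d+1)$-dimensional subspaces, so $\wh{L'}\cap\bbS$ sits within Hausdorff distance less than $\delta/2$ of $\wh{L}\cap\bbS$; in particular $\wh{L'}\cap K=\{0\}$. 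Since $\wh{L'}$ is a linear subspace, $(-K)\cap\wh{L'}=-(K\cap\wh{L'})=\{0\}$ as well, and Theorem~\ref{thm:homogenizationmain}(2) applied to both $K$ and $-K$ gives strong infeasibility of $K\cap L'$ and of $(-K)\cap L'$. Hence both programs are stably infeasible.

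For necessity, I argue the contrapositive: assuming $K\cap\wh{L}\neq\{0\}$, I produce an arbitrarily close $L'$ for which $K\cap L'$ or $(-K)\cap L'$ is feasible. If $K\cap\wh{L}\not\subset\lin(L)$, Proposition~\ref{prop:feasibleandinfty} already gives that $K\cap L$ or $(-K)\cap L$ itself is feasible and we are done, so assume $K\cap\wh{L}\subset\lin(L)$ and pick $v\in K\cap\lin(L)\setminus\{0\}$. A key preliminary observation is that $\tint(K)\cap\lin(L)=\emptyset$ in this subcase: any common point would lie in $\tint(K)\cap\wh{L}$, so by Theorem~\ref{thm:homogenizationmain}(1) one of $K\cap L$, $(-K)\cap L$ would be strongly feasible, contradicting that both are infeasible by Proposition~\ref{prop:feasibleandinfty}. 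In particular, any chosen $p\in\tint(K)$ satisfies $p\notin\lin(L)$.

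I then extend $v$ to a basis $v,v_2,\ldots,v_d$ of $\lin(L)$ and, for small $\epsilon>0$, set $v_\epsilon:=(1-\epsilon)v+\epsilon p$ and $W_\epsilon:=\lspan(v_\epsilon,v_2,\ldots,v_d)$. Since $p\notin\lin(L)$, the tuple $(v_\epsilon,v_2,\ldots,v_d)$ is linearly independent for every $\epsilon\in(0,1]$, so $W_\epsilon$ is $d$-dimensional and tends to $\lin(L)$ in the Grassmannian as $\epsilon\to 0$; fixing $v_0\in L$, the affine space $L'_\epsilon:=v_0+W_\epsilon$ converges to $L$ in the affine Grassmannian. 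Convexity of $\tint(K)$ together with $v\in K$ and $p\in\tint(K)$ gives $v_\epsilon\in\tint(K)$ for $\epsilon\in(0,1]$, and for $T>0$ large the point $v_0+Tv_\epsilon\in L'_\epsilon$ has unit direction arbitrarily close to $v_\epsilon/\|v_\epsilon\|\in\tint(K)\cap\bbS$, so it lies in the open cone $\tint(K)$. Thus $K\cap L'_\epsilon$ is feasible for every small $\epsilon>0$, showing that $K\cap L$ is not stably infeasible. The main delicate point will be checking that this basis-level perturbation really defines a genuine neighborhood in the affine Grassmannian topology (and, symmetrically on the sufficiency side, that a Grassmannian neighborhood of $\wh{L}$ gives Hausdorff convergence of the spherical sections); once this topological bookkeeping is handled, the rest is elementary convex geometry.
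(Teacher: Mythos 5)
Your proof is correct, but it takes a genuinely different route from the paper in both directions. For the ``if'' direction the paper argues on the dual side: from $K\cap\wh{L}=\{0\}$ it extracts a functional $\ell\in\tint(K^\vee)$ vanishing on $\wh{L}$ and then invokes Lemma~\ref{lem:neighborhoodGrassmannian} (the openness, in the Grassmannian, of the set of subspaces contained in a hyperplane drawn from an open set of hyperplanes); you instead work on the primal side, using that $K\cap\bbS$ and $\wh{L}\cap\bbS$ are disjoint compact sets at positive distance and that spherical sections vary Hausdorff-continuously, then feed $K\cap\wh{L'}=\{0\}$ into Theorem~\ref{thm:homogenizationmain}(2). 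Your version avoids the dual cone and the appendix lemma, and even yields \emph{strong} infeasibility of every nearby program, but it does not produce the interior separating functional that the paper reuses immediately afterwards in Corollary~\ref{cor:interiorsep}. For the ``only if'' direction the paper simply asserts that every Grassmannian neighborhood of $\wh{L}$ contains a subspace meeting $\tint(K)$ and then descends to affine subspaces via Lemma~\ref{lem:homogenization(1)}; your tilting construction $v_\epsilon=(1-\epsilon)v+\epsilon p$ makes that assertion explicit and is a welcome addition (the case split via Proposition~\ref{prop:feasibleandinfty}, the check that $p\notin\lin(L)$, and the cone argument for $v_0+Tv_\epsilon$ are all sound). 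The remaining ``topological bookkeeping'' you flag is genuinely routine --- $L'_\epsilon\to L$ in the local coordinates of Remark~\ref{rem:affineGrassmannian}, and Hausdorff continuity of $W\mapsto W\cap\bbS$ --- though you should note explicitly that a small enough neighborhood of the proper affine subspace $L$ consists only of proper affine subspaces (if $0\in L'$ then $K\cap L'\ni 0$ is trivially feasible), so that the spans $\wh{L'}$ all have the expected dimension.
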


\begin{proof}
First suppose that $K\cap \wh{L} = \{0\}$. This means that there is a linear form $\ell \in \tint(K^\vee)$ such that $\ell(x) = 0$ for all $x\in \wh{L}$. So Lemma~\ref{lem:neighborhoodGrassmannian} implies that $K\cap L$ and $(-K)\cap L$ are stably infeasible.

Conversely, if $K\cap L$ were stably infeasible and $K\cap \wh{L}$ contained a nonzero vector, then, for every neighborhood of $\wh{L}$ in the Grassmannian of linear subspaces of $V$ of dimension $\dim(\wh{L})$, there would exist a linear subspace $L'$ such that $K\cap L'$ is strongly feasible. This linear subspace is of the from $\wh{L''}$ for an affine subspace $L''$ of dimension $\dim(L)$ in a neighborhood of $L$ in the Grassmannian of affine subspaces in $V$. As before, this implies that either $K\cap L''$ or $(-K)\cap L''$ is feasible. That is a contradiction.
\end{proof}

From the point of view of homogenization, it is often natural to assume that $(-K)\cap L$ is empty because the cone $K$ was constructed in such way; see for instance the case of linear programming. With this additional assumption, the characterization of stable infeasibility in Theorem~\ref{thm:stablyinfeasiblehom} is simpler
and the above proof implies the following alternative definition of stable infeasibility.
\begin{Cor}\label{cor:interiorsep}
  Let $K\subset V$ be a regular cone. Let $L\subset V$ be a proper affine subspace with $\codim(L)\geq 2$ and $(-K)\cap L = \emptyset$. Then $K\cap \wh{L} = \{0\}$ if and only if $K\cap L$ is stably infeasible. In particular, there exists a separating hyperplane $\ell\in \tint(K^\vee)$ with $\ell(x)<0$ for all $x\in L$ if and only if $K\cap L$ is stably infeasible.
\end{Cor}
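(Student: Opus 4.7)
The plan is to prove the two equivalences separately, deriving the second from the first via a separation argument. For the first equivalence ($K\cap\wh L=\{0\}\iff K\cap L$ stably infeasible), the forward direction is immediate from Theorem~\ref{thm:stablyinfeasiblehom}, which gives stable infeasibility of both $K\cap L$ and $(-K)\cap L$ under that hypothesis. The hard direction is the reverse, which I will prove by an explicit perturbation in the Grassmannian of affine subspaces.

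For that reverse direction, assume $K\cap L$ is stably infeasible and $(-K)\cap L=\emptyset$. Since both $K\cap L$ and $(-K)\cap L$ are then empty, Proposition~\ref{prop:feasibleandinfty} yields $K\cap\wh L\subseteq\lin(L)$, so it suffices to rule out a nonzero $y\in K\cap\lin(L)$. Suppose such a $y$ exists. Pick $v_0\in L$ orthogonal to $\lin(L)$ (nonzero because $L$ is proper), complete $y$ to a basis of $\lin(L)$, and for small $\theta>0$ let $\lin(L_\theta)$ be the span of this basis after the replacement $y\mapsto y-\theta v_0$. Setting $L_\theta:=v_0+\lin(L_\theta)$, we have $L_\theta\to L$ in the Grassmannian as $\theta\to 0^+$ by continuity of the basis construction; yet the direct computation $v_0+\theta^{-1}(y-\theta v_0)=\theta^{-1}y\in K$ exhibits a point of $K\cap L_\theta$ for each $\theta>0$, contradicting stable infeasibility.

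For the second equivalence, the forward direction ``$\ell$ exists $\Rightarrow K\cap L$ stably infeasible'' reduces to showing $K\cap\wh L=\{0\}$ and applying the first equivalence. Such an $\ell$ is bounded above on $L$, hence constant there, so $\ell|_{\lin(L)}=0$ and $\ell(v_0)<0$. For any nonzero $x=\alpha v_0+w\in K\cap\wh L$ (with $w\in\lin(L)$), the interior condition gives $0<\ell(x)=\alpha\ell(v_0)$, forcing $\alpha<0$; then $x/\alpha=v_0+w/\alpha$ lies in both $L$ and $-K$, contradicting $(-K)\cap L=\emptyset$.

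The converse ``stably infeasible $\Rightarrow\ell$ exists'' combines two inputs. From $K\cap\wh L=\{0\}$ (via the first equivalence) we get $K\cap\lin(L)=\{0\}$, and the standard duality fact that $\tint(K^\vee)\cap W^\perp$ is nonempty whenever $K$ is regular with $K\cap W=\{0\}$ (proved by passing to the quotient $V/W$, under which the image of $K$ is a closed pointed cone with dual of nonempty interior) yields $\ell_1\in\tint(K^\vee)$ with $\ell_1|_{\lin(L)}=0$. From strong infeasibility (implied by stable infeasibility) and Theorem~\ref{lem:strictSep} we obtain $\ell_0\in K^\vee$ strongly separating $K$ from $L$; such an $\ell_0$ is necessarily constant on $L$ with value $-c<0$ and zero on $\lin(L)$. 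The combination $\ell:=\ell_0+\lambda\ell_1$ lies in $\tint(K^\vee)$ for every $\lambda>0$, and its constant value $-c+\lambda\ell_1(v_0)$ on $L$ is negative for $\lambda$ sufficiently small, as required. The main obstacle throughout is the perturbation step in the first equivalence: I must verify that the chosen $L_\theta$ both converges to $L$ in the Grassmannian and meets $K$ for every small $\theta>0$, which is why I take $v_0\perp\lin(L)$ so that $v_0\notin\lin(L)$ and the replacement $y\mapsto y-\theta v_0$ produces a genuinely distinct $d$-dimensional subspace close to $\lin(L)$.
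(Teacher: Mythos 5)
Your proof is correct, and it is worth recording how it differs from the one in the paper. The overall architecture is the same (one direction via Theorem~\ref{thm:stablyinfeasiblehom}, the other by excluding nonzero vectors of $K\cap\wh L$, then a separation argument for the functional $\ell$), but you treat the delicate direction more carefully. The paper's proof of ``stably infeasible $\Rightarrow K\cap\wh L=\{0\}$'' rests on rescaling a nonzero $\wt x\in K\cap\wh L$ into $L$, which only disposes of vectors \emph{outside} $\lin(L)$ --- i.e.\ it really proves $K\cap\wh L\subset\lin(L)$, which is just Proposition~\ref{prop:feasibleandinfty} and holds already for weakly infeasible programs; the stability hypothesis is never visibly used. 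The missing case $0\neq y\in K\cap\lin(L)$ is implicitly delegated to the perturbation argument inside Theorem~\ref{thm:stablyinfeasiblehom}, whose reverse direction, as stated, assumes that \emph{both} $K\cap L$ and $(-K)\cap L$ are stably infeasible --- strictly more than the corollary's hypothesis $(-K)\cap L=\emptyset$. Your explicit tilt $y\mapsto y-\theta v_0$ with $v_0\perp\lin(L)$ closes exactly this gap: the feasible point it produces is $\theta^{-1}y\in K$ with $\theta>0$, so it contradicts stable infeasibility of $K\cap L$ alone, and the convergence $L_\theta\to L$ is immediate since the spanning vectors of $\wh{\iota(L_\theta)}$ converge to a linearly independent spanning set of $\wh{\iota(L)}$. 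Likewise, your construction $\ell=\ell_0+\lambda\ell_1$ (a strongly separating functional plus a small multiple of an interior functional vanishing on $\lin(L)$) makes precise the paper's one-line claim that ``we can construct strictly separating functionals''; the only step you leave compressed is the standard fact that $K\cap W=\{0\}$ for a regular cone yields $\tint(K^\vee)\cap W^\perp\neq\emptyset$, which your quotient argument does justify once one uses the characterization $\tint(K^\vee)=\{\ell:\ell(x)>0\text{ for all }x\in K\setminus\{0\}\}$. In short: same strategy, but your version is self-contained under the corollary's actual hypotheses, at the cost of some extra bookkeeping in the affine Grassmannian.
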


\begin{proof}
  If $K\cap L$ is stably infeasible, then the additional assumption on $L$ implies that $K\cap \wh{L} = \{0\}$. 
  Indeed, if $K\cap \wh{L}$ contained a non-zero vector $\wt{x}$, we could rescale this vector such that $\lambda \wt{x} \in L$. If $\lambda$ were negative, then this would be a point in $(-K)\cap L$, which is empty by assumption. So $\lambda$ would have to be positive, which contradicts the fact that $K\cap L$ is infeasible. Conversely, we apply Theorem~\ref{thm:stablyinfeasiblehom}.

  For the second half of the claim, if we have strict separation of $K$ and $L$, then $K\cap L$ is clearly stably infeasible. On the other hand, if $K\cap L$ is stably infeasible, the first part shows that $K\cap \wh{L} = \{0\}$ and therefore, we can construct strictly separating functionals.
\end{proof}

This statement gives another motivation for calling this infeasibility type stable, because the normal vector $\ell$ of a separating hyperplane can be chosen in the interior of the dual cone $K^\vee$. For applications, when $K^\vee$ is a positive orthant or semidefinite cone, for instance, this means that the problem of testing the membership of $\ell$ in $K^\vee$ is stable in regards to small perturbations.

We conclude this discussion with an example of an infeasible, but not stably infeasible, conic program that can be transformed into both weakly feasible and infeasible programs by simply translating the affine space.
\begin{Exm}\label{exm:translations}
  Let $C$ be the convex set $\{(x,y,z) \in \R^3 \mymid x \geq 0, y \leq 1, xy-1 \geq 0, z = 1\}$ and let $K = \{t \, v \in \R^3 \mymid t \in \R_+, v \in C\}$ be the conical hull of $C$ (the smallest cone containing $C$). Let $L$ be the line $\{(x,y,z) \in \R^3 \mymid y = -1, z = 1\}$ included (together with $C$) in the hyperplane $\{(x,y,z) \in \R^3 \mymid z = 1\}$. Then it is easy to check that
  \begin{itemize}
  \item
    $K \cap L$ is not stably infeasible
  \item
    $K \cap (L + (0,1,0)^T)$ is weakly infeasible
  \item
    $K \cap (L + (0,2,0)^T)$ is weakly feasible.
  \end{itemize}
\end{Exm}

\section{Infeasibility certificates}\label{sec:certs}
In this section, our goal is to provide infeasibility certificates (see Definition \ref{def:certificate}) for general conic programs using homogenization. We will describe a general facial reduction algorithm targeted to our homogenization process. Then, we will give conditions for which certificates can be constructed in the base field.

\begin{defi}\label{def:certificate}
Let $K \subset V$ be a regular cone, and let $L \subset V$ be an affine space. An affine function $f$ on $V$ is called an \emph{infeasibility certificate} of $K \cap L$ whenever $f(x) \geq 0$ on $K$ and $f(x)<0$ on $L$ (or similarly, $f(x) \geq 0$ on $L$ and $f(x)<0$ on $K$).
\end{defi}

An infeasibility certificate exists if and only if $K\cap L$ is strongly infeasible, see Lemma~\ref{lem:strictSep}.

\subsection{A facial reduction algorithm}
Our first goal is to establish an iterative version of infeasibility certificates, relying  on the homogenization described in Section~\ref{sec:homog} and based on facial reduction \cite{borwein1981facial}, that can also be used for weakly infeasible programs. We begin with a technical consequence of the separation theorem.

\begin{Lem}\label{lem:propersubs}
Let $L\subset V$ be a linear subspace and let $K\subset V$ be a regular convex cone. Let $H$ be a supporting hyperplane of $K$ containing $L$. If $K\cap L$ is contained in the relative boundary of the face $K\cap H$ of $K$, then the dimension of $\lspan(K\cap H) \cap L$ is strictly smaller than the dimension of $L$.
\end{Lem}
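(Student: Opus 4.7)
The plan is a proof by contradiction. Set $F := K\cap H$ and $W := \lspan(F)$, and suppose that $L\subset W$, i.e.\ $\dim(W\cap L) = \dim L$; the goal is to contradict $K\cap L\subset \text{rel bd}_W(F)$ via a separation argument combined with a face-by-face reduction.

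As $K$ is a regular cone, $F$ is closed and pointed; by the definition of $W$, it is full-dimensional in $W$, so $F$ is a regular cone in $W$ with $\text{rel int}_W(F)\neq\emptyset$. Since $L\subset H\cap W$, one has $K\cap L = L\cap F$, and the hypothesis becomes $L\cap \text{rel int}_W(F)=\emptyset$.

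I would then apply the separation theorem (Theorem~\ref{lem:strictSep}) in $W$ to the convex sets $L$ and $\text{rel int}_W(F)$. Because $L$ is a linear subspace, any separating functional $\ell\in W^\ast$ must vanish identically on $L$; because $F$ spans $W$ and $\ell\neq 0$, one has $\ell\in F^\vee\setminus\{0\}$ and $\ell$ is strictly positive at some point of $F$. Consequently $F_1 := F\cap \ker(\ell)$ is a proper face of $F$ whose linear span $W_1$ is strictly contained in $W$, still contains $L$, and satisfies $L\cap F = L\cap F_1\subset \text{rel bd}_{W_1}(F_1)$. Iterating yields a strictly decreasing chain $F\supsetneq F_1\supsetneq F_2\supsetneq\cdots$ of faces of $F$, each containing $L$ in its linear span.

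The main obstacle will be handling the termination of this chain carefully. Finite-dimensionality forces the chain to stop at some minimal face $F_r$ with $L\subset W_r := \lspan(F_r)$ where separation is no longer possible; this gives a point $x\in L\cap \text{rel int}_{W_r}(F_r)\subset K\cap L$. Since $F_r\subsetneq F$, this point lies in $\text{rel bd}_W(F)$, which is \emph{a priori} consistent with the hypothesis, so the contradiction does not pop out immediately. I would close the argument by leveraging the minimality of $F_r$ together with the pointedness of $F$ and the symmetry $L=-L$: writing any $y\in L\subset W_r$ as a difference of elements of $F_r$, pointedness of $F_r$ applied to $x$ and $-x$ should force an inductive descent to $L=\{0\}$, contradicting $\dim L\geq 1$; the degenerate case $\dim L = 0$ must then be handled separately (where the hypothesis becomes vacuous once the trivial containment $\{0\}\subset \text{rel bd}_W(F)$ is excluded by noting that $\{0\}\in \text{rel int}_W(F)$ only when $F=\{0\}$).
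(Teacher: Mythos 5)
Your proposal contains one repairable slip and one fatal, unjustified step. The repairable slip: to separate $L$ from $\relint_W(F)$ you invoke Theorem~\ref{lem:strictSep}, but that is \emph{strong} separation and requires $\dist(A,B)>0$, which fails here since $0\in L$ and $0\in F$; what you need is \emph{proper} separation (possible because $\relint(L)=L$ is disjoint from $\relint_W(F)$), which does produce $\ell\in F^\vee\setminus\{0\}$ vanishing on $L$. The fatal step is the assertion that $W_1=\lspan(F_1)$ ``still contains $L$.'' Separation only gives $L\subset\ker(\ell)$, and $\lspan(F\cap\ker(\ell))$ is in general a proper subspace of $\ker(\ell)\cap W$ that need not contain $L$: take $F=(\R_{+})^3$ in $W=\R^3$, $L=\lspan\{(1,-1,0)\}$, $\ell=(1,1,0)$; then $F_1$ is the ray spanned by $(0,0,1)$ and $L\not\subset\lspan(F_1)$. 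Whether the containment holds depends on the choice of $\ell$, for which you give no argument, so the chain $F\supsetneq F_1\supsetneq\cdots$ with $L\subset\lspan(F_i)$ is not well-founded and the descent never gets started in general.

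More importantly, the endgame you honestly flag as not ``popping out'' genuinely cannot be closed: the terminal configuration $x\in L\cap\relint_{W_r}(F_r)$ with $F_r\subsetneq F$ is consistent, and no appeal to pointedness and $L=-L$ will force $L=\{0\}$. Indeed, the statement read literally, with $H$ an \emph{arbitrary} supporting hyperplane containing $L$, is false. Take $V=\R^3$, $K=\{(x,y,z)\colon z\geq\lvert x\rvert,\ z\geq\lvert y\rvert\}$ (the cone over a square), $L=\R\cdot(1,1,1)$, and $H=\{x=z\}$. Then $F=K\cap H=\{(t,y,t)\colon \lvert y\rvert\leq t\}$, and $K\cap L$ is the extreme ray through $(1,1,1)$, which lies in the relative boundary of $F$; yet $\lspan(F)=H\supset L$, so $\dim(\lspan(K\cap H)\cap L)=\dim(L)$. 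Running your own iteration on this example gives $F_1=$ the ray through $(1,1,1)$, $W_1=\lspan(F_1)=L$, and $x=(1,1,1)\in\relint_{W_1}(F_1)$, with no contradiction available since $\dim L=1$ is a genuine possibility. For comparison, the paper's proof is a one-line contraposition claiming that $L\subset\lspan(K\cap H)$ forces $L\cap\relint(K\cap H)\neq\emptyset$ ``by the separation theorem'' --- exactly the implication your terminal stage shows is unavailable, and the same example contradicts it (already $F$ a quadrant in the plane $W$ and $L$ a boundary line does). Any correct version must constrain $H$, e.g.\ choosing the normal vector $\ell$ in $\relint(K^\vee\cap L^\perp)$ so that $K\cap H$ is the smallest exposed face of $K$ containing $K\cap L$ --- a choice that is available in the proof of Theorem~\ref{thm:infeasibilitycerts}, in the spirit of facial reduction, where progress is measured on the chain of faces. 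Under such a hypothesis your separation-and-descent strategy has a chance; as proposed, however, the gap is not a missing trick at the end but the fact that the claim being proved, quantified over all supporting $H\supset L$, fails.
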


\begin{proof}
By contraposition, if $\lspan(K\cap H)\cap L  = L$, then $L$ must intersect the relative interior of the face $K\cap H$ by the separation theorem \cite[Ch.III, Th.1.2]{barvinok}.
\end{proof}

The following definition goes back to work of Pataki in the context of facial reduction, see \cite{patakinice} and \cite{pataki2013strong}.

\begin{defi}
A convex cone $K\subset V$ is \emph{nice} if $K^\vee + F^\perp$ is closed for every face $F\subset K$.
\end{defi}

\begin{Thm}\label{thm:infeasibilitycerts}
Let $K\subset V$ be a regular convex cone. Let $L\subset V$ be a proper affine space with $\codim(L)\geq 2$ and $(-K)\cap L = \emptyset$.
%and let $U \subset V$ be a proper affine hyperplane containing $L$ (cf.~Homogeneous setup on page \pageref{homsetup}).
If $K \cap L = \emptyset$, there exists a sequence of elements $\ell_1,\ell_2,\ldots,\ell_k \in V^\ast$ with the following properties.

Set $F_0 = K$, $F_i = \{x\in F_{i-1}\colon \ell_i(x) = 0\}$ and $W_i = W_{i-1}\cap \lspan(F_{i-1})$ for $i>1$ with $W_1 = \wh{L}$. We have
\begin{compactenum}
\item $k\leq 1+\dim(L)$,
\item $\ell_i\in F_{i-1}^\vee$,
\item $F_i \supset F_{i+1}$,
\item $F_i\supset K \cap W_i \supset K \cap \wh{L}$, and
\item $F_k \subset \lin(L)$.
\end{compactenum}
If $K$ is nice then we can choose all of the $\ell_i$ to be in $K^\vee$. On the other hand, if the cone is not nice, there is a linear space $W_1$ for which at least one of the $\ell_i$ is not in $K^\vee$.
\end{Thm}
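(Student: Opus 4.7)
The plan is an iterative facial reduction, cutting $K$ down by one supporting hyperplane at a time until the remaining face lies in $\lin(L)$. First I would invoke Proposition~\ref{prop:feasibleandinfty} on the hypothesis $K\cap L=\emptyset$ together with $(-K)\cap L=\emptyset$ to obtain $K\cap \wh{L}\subset \lin(L)$, and Lemma~\ref{lem:homogenization(1)} to get $\wh{L}\cap \tint(K)=\emptyset$. A short induction using $K\cap \wh{L}\subset F_{i-1}\subset \lspan(F_{i-1})$ then yields $K\cap W_i=K\cap \wh{L}$, so the chain (4) reduces to verifying $K\cap \wh{L}\subset F_i$, and (3) is built into the definition.

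For the inductive step, assume $F_{i-1}\not\subset\lin(L)$; working inside $\lspan(F_{i-1})$, where $F_{i-1}$ is a regular cone, I would produce $\ell_i\in F_{i-1}^\vee$ with $\ell_i|_{W_i}=0$ and $\ell_i\not\equiv 0$ on $F_{i-1}$ by convex separation. The condition to check is $W_i\cap \relint(F_{i-1})=\emptyset$. Given a hypothetical nonzero $x\in W_i\cap\relint(F_{i-1})$, I write $x=w+\mu v_0$ with $w\in\lin(L)$ and $v_0\in L$ fixed: if $\mu\neq 0$, then $x/\mu\in L\cap(\pm K)$, contradicting the hypothesis; if $\mu=0$ then $x\in\lin(L)$, and perturbing by any $y\in W_i\setminus\lin(L)$, the points $x\pm\varepsilon y$ lie in $F_{i-1}\cap\wh{L}=K\cap\wh{L}\subset\lin(L)$ for small $\varepsilon>0$, which forces $y\in\lin(L)$, contradicting the choice of $y$ as long as $W_i\not\subset\lin(L)$.

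Once $\ell_i$ is built, Lemma~\ref{lem:propersubs} applied to the regular cone $F_{i-1}\subset\lspan(F_{i-1})$, the subspace $W_i$, and the supporting hyperplane $\ker\ell_i\cap\lspan(F_{i-1})$, gives $\dim W_{i+1}<\dim W_i$. Since $\dim W_1=\dim\wh{L}=1+\dim L$ and throughout the iteration one has $\lspan(K\cap\wh{L})\subset W_k$, the iteration terminates within $1+\dim L$ steps. The stably infeasible case $K\cap\wh{L}=\{0\}$ is handled directly by Corollary~\ref{cor:interiorsep}, which produces $\ell_1\in\tint(K^\vee)\cap\wh{L}^\perp$ with $F_1=\{0\}\subset\lin(L)$, so $k=1$. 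The subtle terminating case $W_i\subset\lin(L)$ with $F_{i-1}\not\subset\lin(L)$ corresponds to $L\cap\lspan(F_{i-1})=\emptyset$; here the perturbation argument above breaks down, and I would instead pass to the quotient $\lspan(F_{i-1})/W_i$, observe that the image of $F_{i-1}$ is pointed, and pick $\ell_i$ from the interior of the quotient dual cone to produce an $F_i$ whose zero face lies entirely in $W_i\subset\lin(L)$.

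For the ``nice'' clause, Pataki's definition ensures $K^\vee+F_{i-1}^\perp$ is closed, which by standard convex duality implies $F_{i-1}^\vee=K^\vee+F_{i-1}^\perp$. Decomposing $\ell_i=\ell_i'+\eta_i$ with $\ell_i'\in K^\vee$ and $\eta_i\in F_{i-1}^\perp$, the two functionals agree on $F_{i-1}$ and cut out the same face, so (2) can be strengthened to $\ell_i\in K^\vee$. For the converse direction, I would exhibit a concrete non-nice cone $K$ (along the lines of Pataki's examples in \cite{patakinice}) together with a specific linear subspace $W_1=\wh{L}$ so that some supporting functional required by the algorithm lies in $F_{i-1}^\vee$ but not in $K^\vee+F_{i-1}^\perp$. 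The main obstacle I anticipate is the careful analysis of the terminating ``$W_i\subset\lin(L)$'' step, where both the existence of $\ell_i$ and the dimension count that keeps $k\leq 1+\dim L$ rely on the quotient argument rather than on the direct scaling contradiction.
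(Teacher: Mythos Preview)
Your approach is essentially the paper's: iterative facial reduction on $K\cap\wh{L}$, with separation producing each $\ell_i$ and Lemma~\ref{lem:propersubs} forcing $\dim W_{i+1}<\dim W_i$. The paper organizes the iteration around the minimal face $F$ of $K$ containing $K\cap\wh{L}$ and terminates when $F_i=F$, asserting in one line that ``$F\subset\lin(L)$''; you instead test $F_{i-1}\subset\lin(L)$ directly, which is why you encounter the ``subtle terminating case'' $W_i\subset\lin(L)$ with $F_{i-1}\not\subset\lin(L)$. Your separation argument in the non-subtle case is correct and in fact more explicit than the paper's.

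Your quotient fix for the subtle case does not work: the image of $F_{i-1}$ in $\lspan(F_{i-1})/W_i$ need not be pointed. Take $K=\sym^3_+$, $D=\mathrm{diag}(1,1,0)$, $v_0=E_{13}+E_{31}$, and $L=v_0+\R D$. One checks $K\cap L=(-K)\cap L=\emptyset$ and $K\cap\wh{L}=\R_+D$. Any $\ell_1\in K^\vee$ with $\ell_1(D)=0$ is a multiple of $E_{33}$, so $F_1$ is the $\sym^2_+$ face and $W_2=\R D=\lin(L)$. Here $D\in\relint(F_1)$, so the image of $F_1$ in $\lspan(F_1)/W_2\cong\sym^2/\R I_2$ is the whole quotient; its dual cone is $\{0\}$, and every $\ell_2\in F_1^\vee$ with $\ell_2(D)=0$ already vanishes on all of $\lspan(F_1)$, giving $F_2=F_1$. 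This same example also breaks the paper's assertion ``$F\subset\lin(L)$'' (here $F=F_1$ is three-dimensional while $\lin(L)=\R D$ is a line), and with $k\le 1+\dim L=2$ there is no face $F_k$ of $\sym^3_+$ satisfying both $\R_+D\subset F_k$ and $F_k\subset\R D$. So the obstruction you flagged is genuine and shared with the paper's own argument, not an artifact of your route.

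For the final clause on non-nice cones, your plan to ``exhibit a concrete non-nice cone'' is weaker than what the statement requires: it asserts that \emph{every} non-nice $K$ admits such a $W_1$. The paper constructs one from an arbitrary face $E$ with $K^\vee+E^\perp$ not closed, by choosing a minimal face $D$ of $E^\vee$ strictly containing $E^\perp$ but not lying in $K^\vee+E^\perp$, and setting $W_1=D^\perp\cap\lspan(E)$.
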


\begin{proof}
If $K \cap L$ is stably infeasible, then $\wh{K} \cap \wh{L} = \{0\}$ and there exists an element $\ell \in \tint(K^\vee)$ with $L \subset \{x \in V \colon \ell(x) = 0\}$, see Corollary~\ref{cor:interiorsep}. In this case $-\ell$ is a certificate of the claimed form with $k=1$.

So we are left with the case $K \cap\wh{L} \neq \{0\}$.
Let $F\subset K$ be the smallest face containing $K\cap\wh{L}$.
Since $K\cap L = \emptyset$, we have $F\subset \lin(L)$ and $F\neq K$. So there exists a supporting hyperplane $H= \{x\in V\colon \ell(x) = 0\}$ with $\ell\in K^\vee$ and $\wh{L}\subset H$. Set $W_1 = \wh{L}$, $\ell_1 = \ell$, and $F_1 = K\cap H$. We have $F\subset F_1$. If $F_1 = F$, we are done for $k=1$.
If $F_1 \neq F$, put $W_2 = W_1\cap \lspan(F_1)$, which is a proper subspace of $L_1$ by the previous Lemma~\ref{lem:propersubs}. Since $F_1\neq F$, we know that $W_{2}\cap F_1 = F$ is a proper face of $F_1$. By \cite[Ch.III, Th.1.2]{barvinok}, there is a supporting hyperplane of the cone $F_1$ containing $L_2$, so its normal vector is in the dual convex cone $F_1^\vee$.
Set $F_2 = \{x\in K\colon \ell_2(x) = 0\}$. If $F_2 = F$, we are done for $k=2$. Otherwise, we proceed iteratively to obtain the sequence $\ell_1,\ell_2,\ldots,\ell_k$ in the claim. The bound $k\leq 1+\dim(L)$ follows from the inequalities
$$
1\leq \dim(F) = \dim(F_k)<\dim(W_{k-1})<\ldots<\dim(W_1) = 1+\dim(L).
$$

For the last part, we conclude from biduality that $F_i^\vee = \clos(K^\vee + F_i^\perp)$. In particular, if the cone is nice, then $F_i^\vee = K^\vee + F_i^\perp$ holds. In that case, we can choose $\ell_{i+1}\in F_i^\vee$ to be in $K^\vee$ (by changing it by an element in the lineality space $F_i^\perp$).
On the other hand, if the cone is not nice, there exists a proper face $E\subset K$ such that $K^\vee + E^\perp$ is not closed. We choose a minimal face $D$ of $E^\vee = \clos(K^\vee + E^\perp)$ strictly containing the lineality space $E^\perp$ of $E^\vee$ and not contained in $K^\vee + E^\perp$. Such a face exists because $E^\vee$ is generated as a closed convex cone by all its minimal faces strictly containing the lineality space. Now set $W_1 = D^\perp \cap \lspan(E)$. Then $K \cap W_1$ is weakly feasible because it is contained in the face $E$. But there is no $\ell \in K^\vee$ that vanishes on $W_1$ because if there were, then $D = \R_+ \ell + E^\perp$ would be contained in $K^\vee + E^\perp$. 
\end{proof}

\begin{Rem}
The essential step in the proof of the previous Theorem~\ref{thm:infeasibilitycerts} is closely related to facial reduction \cite{borwein1981facial,ramanatuncelwolkowiczMR1462059} on the weakly feasible conic program $K\cap\wh{L}$. In fact, facial reduction algorithms compute $\lspan(F)$ by computing the supporting hyperplanes $\ell_i$ in the above theorem. For semidefinite programming, this is often done by rank maximization.

If the conic program is stably infeasible, the facial reduction is unnecessary, in the sense that $k=1$. If it is not stably infeasible, regardless of whether it is strongly or weakly infeasible, it might require $k\geq 2$. We give explicit examples of semidefinite programs in Section~\ref{sec:homogSDP}.
\end{Rem}

We can apply this theorem certainly to the positive orthants (linear programming). More interestingly, it applies to semidefinite programs, see below. Other families of nice cones include second order cones. Moreover, given a family of nice regular convex cones $K_n\subset V_n$ such that we can check membership in $K_n$ and in $K_n^\vee$ in polynomial time, the feasibility problem for this family is in $\text{NP}_\R$ and $\text{co-NP}_\R$. We give details below for semidefinite programs, see Theorem~\ref{thm:sdpconp}.

\subsection{Rationality}
Let us turn to the existence of rational infeasibility certificates.
In this section, we suppose that the cone $K$ (resp. the affine space $L$) in Problem \ref{CPp} is a $\Q-$definable semialgebraic set (resp. affine space), that is defined by polynomial inequalities (resp. equalities) with coefficients in $\Q$. The semialgebraic model includes the case of linear and semidefinite programming, together with a large range of other optimization problems, while the rationality of the defining (in)equalities reflects the usual assumption that the model can be represented by rational data. Under these assumptions, we address the question whether one can compute infeasibility certificates that are again definable over $\Q$.

If the infeasibility certificate $f$ in Definition \ref{def:certificate} can be defined with rational coefficients, we say that the certificate is \emph{rational}.

\begin{Rem}\label{rem:rational}
Let $K$ be a cone and let $L$ be an affine space. If $K \cap L$ and $(-K)\cap L$ are stably infeasible, then there exists a rational infeasibility certificate. Indeed, this is a direct consequence of Corollary~\ref{cor:interiorsep} and the fact that $\Q^n$ is dense in $\R^n$.
\end{Rem}

In general, even for strongly infeasible programs, rational certificates need not exist, as we demonstrate below. 
In the case of linear programming, it is well-known that rational infeasibility certificates always exist, by Farkas Lemma.
For the sake of completeness we give a proof of this fact.
%%Here, $K = (\R_{+})^n$ and the affine space $L$ is usually defined by $Ax=b$. Recall the definition of strong separation in Definition~\ref{def:separation}.

\begin{prop}\label{prop:farkcert}
Suppose that the entries of $A,b$ are in $\Q$. If $(\R_{+})^n\cap \{x\in\R^n\colon Ax = b\}$ is infeasible, there exists $y\in\Q^n$ and $\lambda \in \Q$ such that $H = \{x\in\R^n\colon y^T(Ax-b) = \lambda\}$ strongly separates $L$ and $(\R_{+})^n$. 
\end{prop}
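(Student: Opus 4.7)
The plan is to first apply the classical Farkas Lemma over $\R$ to obtain a real separating functional, and then upgrade it to a rational one using the rationality of the defining data. Concretely, since $(\R_{+})^n \cap \{x \in \R^n \mymid Ax = b\}$ is infeasible, Farkas' Lemma over $\R$ produces $y \in \R^m$ with $A^T y \geq 0$ and $b^T y < 0$. Because the strict inequality on $b^T y$ survives positive rescaling, we may assume $b^T y \leq -1$. Thus $P = \{y \in \R^m \mymid A^T y \geq 0, \; -b^T y \geq 1\}$ is a non-empty polyhedron defined by linear inequalities with rational coefficients.

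Next, I invoke the standard fact that a non-empty polyhedron defined by rational data contains a rational point. One way to see this is by induction on the dimension via Fourier--Motzkin elimination, which preserves rationality of the defining inequalities and eventually reduces to a system of one-variable rational inequalities from which a rational solution can be lifted back. Equivalently, the Minkowski--Weyl decomposition shows that a rational polyhedron is generated by rational vertices and rational extreme rays. In any case, pick $y^* \in P \cap \Q^m$.

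Finally, I build the separating hyperplane. Set $\lambda = 1/2 \in \Q$ and $H = \{x \in \R^n \mymid y^{*T}(Ax - b) = \lambda\}$. On $L = \{x \in \R^n \mymid Ax = b\}$ we have $y^{*T}(Ax - b) = 0 < \lambda$, whereas on $(\R_{+})^n$ we compute $y^{*T}(Ax - b) = (A^T y^*)^T x - b^T y^* \geq 0 + 1 > \lambda$, which is the strong separation required by Definition~\ref{def:separation}. The main obstacle is the rationality promotion in the second step; it is essential because the certificate produced by the real Farkas Lemma need not be rational a priori, even when $A$ and $b$ are rational. Once that promotion is granted, the construction of $H$ from $y^*$ and $\lambda$ is purely formal.
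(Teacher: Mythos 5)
Your proposal is correct and follows essentially the same route as the paper: relax the strict inequality $b^Ty<0$ to a closed rational one (the paper uses $y^Tb\leq\varepsilon$ for a negative rational $\varepsilon$, you normalize to $-b^Ty\geq 1$), observe that the resulting non-empty rational polyhedron contains a rational point, and then pick a rational $\lambda$ strictly between $0$ and the infimum of the functional over the orthant. Your explicit justification of the rational-point step via Fourier--Motzkin is a welcome detail that the paper leaves implicit, but the argument is the same.
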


\begin{proof}
The vector $y$ can be chosen to be rational because it is the solution of linear inequalities with coefficients in $\Q$: indeed, the condition $y^Tb < 0$ can be weakened as a closed condition $y^Tb \leq \varepsilon$ for some negative $\varepsilon \in \Q$ (under the assumption that the open inequality has a solution). The weakened system of inequalities is a feasible (by Farkas Lemma) linear program defined over $\Q$, hence it has at least one rational solution. So, let $y \in \Q^m$ satisfy $A^Ty \geq 0, y^Tb<0$ and set ${\ell}(x)=y^T(Ax-b)$. Then ${\ell}$ vanishes on $L$ and we have ${\ell}(x)=y^TAx-y^Tb>0$ for all $x \in (\R_{+})^n$. Since $A^Ty\geq 0$, we know that $r := \inf_{x \in (\R_{+})^n}{\ell}(x)\geq -y^Tb>0$. Let $\lambda \in (0,r) \cap \Q$. Then
$$
\sup_{x \in L}{\ell}(x) = 0 < \lambda < r = \inf_{x \in (\R_{+})^n}{\ell}(x)
$$
hence $H$ strongly separates $L$ and $(\R_{+})^n$.
\end{proof}

The infeasibility certificate $f(x) = {\ell}(x)-\lambda$ in Proposition~\ref{prop:farkcert} is rational and exists independently of the stability of the infeasibility, that is even if $L$ is contained in a hyperplane intersecting the cone $(\R_{+})^n$ at infinity (in which case $(\R_{+})^n \cap L$ is not stably infeasible).

We now turn to semidefinite programming. We illustrate with the following example that there are strongly infeasible semidefinite programs that do not admit rational infeasibility certificate. Recall from Remark \ref{rem:rational} if such a program exists, its infeasibility is necessarily unstable.

  The underlying reason for this example is the existence of linear spaces $U \subset \sym^d$, defined over $\Q$, with the property that $\sym^d_+\cap U$ is non-empty but does not contain any rational points. Examples for such linear spaces are given by Scheiderer in \cite{ScheidererMR3506605} in the context of sum-of-squares certificates of positive polynomials. We construct below a strongly infeasible (but not stably infeasible) semidefinite program that does not admit rational infeasibility certificates in the sense of Definition \ref{def:certificate}.

\begin{Exm} \label{exm:rationalcertificates}
  Let $v = (x^2, y^2, z^2, xy, xz, yz)^T$ be the column vector containing the homogeneous monomials of degree $2$ in $x,y,z$. The explicit example \cite[Example~2.8]{ScheidererMR3506605} consists of the linear space $L' \subset \sym^6$, which is the span of the affine space of symmetric matrices $M$ defined by the affine equations
\[
v^T M v = (x^4  + xy^3 + y^4 - 3x^2yz - 4xy^2z + 2x^2z^2 + xz^3 + yz^3 + z^4).
\]
The linear space $L'$ is a $7$-dimensional subspace of the $21$-dimensional space $\sym^6$ such that $\sym^6_+\cap L'$ is a $2$-dimensional cone with no rational points. Indeed, the right hand side in the previous equality is a positive polynomial with rational coefficients, that cannot be written as a sum of squares of polynomials with rational coefficients.

Let $L = (L')^\perp - \id_6 \coloneqq \{P - \id_6 \mymid P \in (L')^\perp\} \subset (\sym^6_+)^*$. We claim that $ \sym^6_+\cap L$ (after the identification $\sym^6_+ = (\sym^6_+)^*$) is strongly infeasible but that there is no rational certificate for this fact. Indeed, let $A \in \sym^6_+\cap L'$. Then $\scp{A,Q} = \scp{A,P} - \scp{A,\id_6} < 0$ for every $Q = P - \id_6 \in L$, $P \in (L')^\perp$. This shows that $\sym^6_+\cap L$ is strongly infeasible. To see that there is no rational infeasibility certificate, let $A$ be such that $\scp{A,M} \geq 0$ for all $M\in\sym^6_+$ and $\scp{A,Q} < 0$ for all $Q \in L$. Since $\sym^6_+$ is self-dual, it follows that $A \in \sym^6_+$. Since $A$ is bounded from above (as a linear form) on $L$ and $L$ is an affine space, it follows that $A$ has to be constant on $L$, i.e.~$A$ must vanish on $(L')^\perp$. We conclude that $A$ lies in $\sym^6_+\cap L'$, which does not contain any rational points.
\end{Exm}

\section{Homogenization of semidefinite programs}\label{sec:homogSDP}
In this section, we apply homogenization as discussed in Section~\ref{sec:homog} for general conic programs in the special case of semidefinite programs. A \emph{semidefinite program} (SDP) in standard implicit form (see e.g.~\cite[Chapter~2]{deklerkMR2064921}) is given by

\begin{equation} 
\label{SDPp}
  \inf \, \scp{C,X} \,\,\,\, \text{s.t.} \,\,\, X \in K, \,\, \text{and} \,\, \scp{M_i,X} = b_i, i=1, \ldots, c.
\end{equation}
Above, $C,M_1,\ldots,M_c$ are elements of $\sym^d$, the vector space of real symmetric $d \times d$ matrices. We fix the inner product $\scp{\cdot,\cdot} : \sym^d \times \sym^d \rightarrow \R$, $\scp{A,B}\coloneqq\text{trace}(AB)=\sum_{ij}a_{ij}b_{ij}$, on $\sym^d$. We are concerned with the regular cone $K = \sym^d_+ \coloneqq \{X \in \sym^d \mymid X \succeq 0\}$ of positive semidefinite real symmetric matrices.

A \emph{linear matrix inequality} (usually abbreviated as LMI) gives a parametric representation for the feasible set of a semidefinite program (instead of the implicit representation used above). So let $A_1,\ldots,A_n\in\sym^d$ be linearly independent symmetric matrices and let $A_0 \in \sym^d$ be a fixed matrix. A linear matrix inequality is an expression of the form
\[
A_0 + x_1 A_1 + x_2 A_2 + \ldots + x_n A_n \succeq 0.
\]
The solution set of this inequality is the set of points $(x_1,x_2,\ldots,x_n)\in\R^n$ such that the eigenvalues of the matrix on the left hand side of the inequality are nonnegative. Such a set is called a \emph{spectrahedron}. We say that a linear matrix inequality is (weakly or strongly) (in-)feasible if $\sym^d_+ \cap L$ is (weakly or strongly) (in-)feasible in the sense of Definition~\ref{def:feas}, where $L$ is the affine space  $A_0 + \scp{A_1,A_2,\ldots,A_n}$.
An implicit description of the feasible set as given in \eqref{SDPp} can be made explicit by linear algebra operations over the ground field (the smallest field containing the entries of the $M_i$).

We first comment on a standard example in the literature of a weakly infeasible linear matrix inequality and on the corresponding typical behavior of numerical solvers (see for instance \cite[Example~2.2]{deklerkMR2064921}).

\begin{Example}[Standard weakly infeasible LMI]\label{ex-standard}
  We consider the univariate linear matrix inequality $A(x_1) \succeq 0$ with
  \[
  A(x_1) =
  \left[
  \begin{array}{cc}
    0 & 1 \\
    1 & x_1
  \end{array}
  \right]
  \]
  The linear matrix inequality has no solution since, for instance, $\det A= -1$.
  Remark that the infeasibility is weak since for instance the set 
%%  For every non-zero $n \in \N$, the matrix
  \[
%%  X_n =
  \left\{
  \left[
    \begin{array}{cc}
      1/n & 1 \\
      1 & n
    \end{array}
    \right]
  : n \in \N
  \right\}
  \]
  has distance zero from the affine space defined by the pencil $A(x)$, but it is included in $\sym^2_+$.
  More precisely, Pataki's characterization of ``bad semidefinite programs'' in \cite{pataki2017bad}, essentially states that the above form is canonical for weakly infeasible SDPs.
  
  When trying to solve generic (randomly generated) semidefinite programs over this linear matrix inequality ({\it e.g.} using SeDuMi \cite{SeDuMi} or SDPT3 \cite{toh1999sdpt3} as solvers through Matlab/Yalmip \cite{Lofberg2004}, or CVXOPT \cite{CVXOPT}, a software targeted to conic optimization), one typically gets numerical issues: the solver stops after a few iterations since the objective function is considered unbounded over the admissible set.
%  is positive semidefinite, and the previous linear matrix inequality $A(x_1) \succeq 0$ is given as
%  the intersection of $\sym_+^2$ with $L = \{X \in \sym^2 \mymid \left\langle
%  M_1,X \right\rangle=0, \left\langle M_2,X \right\rangle =2 \}$, for
%  \[
%  M_1 =
%  \left[
%    \begin{array}{cc}
%      1 & 0 \\
%      0 & 0
%    \end{array}
%    \right]
%  \,\,\,\,\,\,\,\,\,\,\,
%  M_2 =
%  \left[
%    \begin{array}{cc}
%      0 & 1 \\
%      1 & 0
%    \end{array}
%    \right].
%  \]
%  We have $\left\langle M_1,X_n \right\rangle=1/n$ and $\left\langle M_2,X_n \right\rangle=2$  for all $n$, from which we deduce that $\dist(X_n,L) \rightarrow 0$, for $n\rightarrow+\infty$  that is the linear matrix inequality is weakly infeasible. The same holds for every linear matrix inequality whose matrix contains  $A(x_1)$ as a principal submatrix. \qed
\end{Example}

A last example shows another weakly infeasible linear matrix inequality. It appears as a pathological case of Lasserre relaxations in
the context of multivariate polynomial optimization.

\begin{Example}[Motzkin polynomial]
  We consider the Motzkin sextic polynomial
  $$
  f = x_1^4x_2^2+x_1^2x_2^4+1-3x_1^2x_2^2,
  $$
  which is globally non-negative but does not admit a certificate as sum of squares of polynomials.
  Moreover, $f-\lambda$ is not a sum of squares for any $\lambda\in\R$
  \cite[Sec.3.1.2]{blekherman2012semidefinite}. Applying
  \cite[Cor.3.3]{waki2012generate}, one gets that high-order Lasserre relaxations of
the optimization problem
  \begin{equation}
    \label{Motzkin}
  f^* = \inf f(x) \,\,\,\, \text{ s.t. } {x \in \R^3}
  \end{equation}
  are weakly infeasible. Since weak infeasibility can be turned into
  strong feasibility or strong infeasibility by small perturbations, it is not surprising
  that the numerical solvers have difficulty handling this problem: When trying to solve \eqref{Motzkin}
  using the software Gloptipoly \cite{henrion2009gloptipoly} under
  Matlab, the Gloptipoly command {\it msdp(min(f))} stops at the third relaxation without computing solutions,
  but forcing it to go through the seventh relaxation, one gets feasible solutions that yield the four minima
  of the Motzkin polynomial. That is, the LMI solver which is called by Gloptipoly computes the correct solution
  even though the corresponding relaxation is infeasible, since the infeasibility is weak (see also \cite{lasserre2018sdp} for a more general analysis).
  %: indeed, because of rounding errors, there is no difference between feasible and infeasible problems that are not stably infeasible.
\end{Example}

\subsection{Membership in co-NP$_\R$}
The goal of this section is to apply our homogenization scheme in order to prove that the SDP feasibility problem belongs to the class $\text{NP}_\R \cap \text{co-NP}_\R$ (the $_\R$ index stands for the Blum-Shub-Smale model of computation, see \cite{blum1998f})
This was first proved by Ramana \cite{RamanaMR1461379} using the so-called Extended Lagrange-Slater Dual of a semidefinite program.

The basic idea to show that the SDP feasibility problem is in co-NP is to find an infeasibility certificate (of polynomial size) as in Proposition~\ref{prop:feasibleandinfty}. This is in general not possible (see Example~\ref{exm:biggerface} below) but rather, we need an iterative version of such certificates, as developed in Theorem~\ref{thm:infeasibilitycerts}. But first, we need to break the symmetry between $K$ and $(-K)$ in that statement.

\begin{Lem}\label{lem:niceconesdp}
  The product of two nice cones is nice. In particular, we have that the cone $\sym^d_+\times \R_+\subset \sym^d \oplus \R$ is nice.
\end{Lem}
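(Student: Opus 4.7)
The plan is to reduce niceness of $K_1 \times K_2$ to the corresponding property on each factor, using three structural identities for product cones. Equip $V_1 \oplus V_2$ with the product inner product. The three facts I would verify are: (a)~every face of the product cone $K_1 \times K_2$ has the form $F_1 \times F_2$ with $F_i \subset K_i$ a face; (b)~$(K_1 \times K_2)^\vee = K_1^\vee \times K_2^\vee$; and (c)~$(F_1 \times F_2)^\perp = F_1^\perp \times F_2^\perp$. Facts (b) and (c) are immediate from the decomposition of the inner product. For (a), if $F$ is a face of $K_1 \times K_2$ and $(x_1,x_2)\in F$, the decomposition $(x_1,x_2) = (x_1,0) + (0,x_2)$ with both summands in $K_1\times K_2$ forces $(x_1,0),(0,x_2) \in F$ by the face property; setting $F_i := \{x\in K_i : (x,0)\in F\}$ (with the obvious modification for $i=2$), one checks $F = F_1\times F_2$ and that each $F_i$ is a face of $K_i$.

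Combining (a)--(c), for any face $F = F_1 \times F_2$ of $K_1 \times K_2$ we have
\[
(K_1 \times K_2)^\vee + F^\perp \;=\; (K_1^\vee + F_1^\perp) \times (K_2^\vee + F_2^\perp).
\]
By niceness of $K_1$ and $K_2$, each factor on the right is closed, so the product is closed in $V_1 \oplus V_2$. This proves that $K_1 \times K_2$ is nice.

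For the specific statement about $\sym^d_+ \times \R_+$, I would invoke two known facts: $\sym^d_+$ is nice \cite{patakinice}, and $\R_+$ is nice because every polyhedral cone is nice (for a polyhedral $K$ and any face $F$, the sum $K^\vee + F^\perp$ is finitely generated, hence closed). The conclusion then follows from the first part.

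The only place that requires attention is the verification of (a); the rest is formal. Since (a) is a standard fact about products of convex cones with short proof as sketched above, I do not anticipate a real obstacle.
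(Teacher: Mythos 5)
Your proposal is correct and follows essentially the same route as the paper: the paper likewise reduces to the factorwise identity $(K_1\times K_2)^\vee + (F_1\times F_2)^\perp = (K_1^\vee+F_1^\perp)\times(K_2^\vee+F_2^\perp)$ using that every face of the product is a product of faces, and then invokes niceness of $\sym^d_+$ and of the polyhedral factor. The only difference is cosmetic: you conclude by noting that a product of closed sets is closed, while the paper phrases the same step through the identity $F^\vee = K^\vee + F^\perp$; you also spell out the face-decomposition fact (a), which the paper simply asserts.
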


\begin{proof}
  Let $K_1,K_2$ be nice cones. A face $F \subset K_1\times K_2$ is of the form $F=F_1\times F_2$ for faces $F_1 \subset K_1$ and $F_2 \subset K_2$. Then one has
  \[
  (F_1 \times F_2)^\vee = F_1^\vee \times F_2^\vee = (K_1^\vee + F_1^\perp) \times (K_2^\vee + F_2^\perp) = (K_1 \times K_2)^\vee + (F_1 \times F_2)^\perp
  \]
  which means $F^\vee = K^\vee + F^\perp$ for every face $F$ of $K = K_1\times K_2$.
  Since both $\sym^d_+$ and $\R^d_+$ are nice for any $d\in\N$, we are done.
\end{proof}

\begin{Cor}\label{thm:sdpconpcert}
Let $L\subset \sym^d$ be a proper affine space. Embed $\sym^d$ into $V = \sym^d \oplus \R$ via $A\mapsto (A,1)$. Let $L'$ be the image of $L$ under this map and set $K = \sym^d_+\times \R_+$.
If $K \cap L'$ is infeasible, there exists
a sequence of matrices $C_1,C_2,\ldots,C_k\in\sym^d_+$ and nonnegative numbers $c_1,c_2,\ldots,c_k$ with the following properties: For every $i = 1,2,\ldots,k$, set $F_i = \{(M,m)\in K\colon \scp{(C_i,c_i),(M,m)} = 0\}$, the face of $K$ supported by $(C_i,c_i)$. Set $L_1 = \wh{L'}$ and $L_i = L_{i-1}\cap \lspan(F_{i-1})$ for $i>1$. We have
\begin{compactenum}
\item $k\leq \min\{d,1+\dim(L)\}$,
\item $F_i\supset F_{i+1}$,
\item $F_i\supset K \cap L_i \supset K \cap \wh{L'}$, and
\item $F_k\subset \lin(L')$.
\end{compactenum}
\end{Cor}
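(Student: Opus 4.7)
The plan is to specialize Theorem~\ref{thm:infeasibilitycerts} to the cone $K = \sym^d_+ \times \R_+$ inside the ambient space $V = \sym^d \oplus \R$, with the embedded affine subspace $L'$ playing the role of $L$ in the theorem.

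The hypotheses are easy to verify: $K$ is regular as a product of two regular cones and is nice by Lemma~\ref{lem:niceconesdp}. The subspace $L'$ is proper and $\codim_V(L') = \codim_{\sym^d}(L)+1 \geq 2$ because one coordinate has been added to the ambient space. Crucially $(-K) \cap L' = \emptyset$, since every element of $L'$ has second coordinate equal to $1$ while every element of $-K$ has non-positive second coordinate. Applying Theorem~\ref{thm:infeasibilitycerts}, I obtain functionals $\ell_1,\dots,\ell_k \in V^\ast$, which by niceness can be chosen in $K^\vee = \sym^d_+ \times \R_+$; under the inner product identification $V^\ast \cong V$, these take the form $\ell_i = (C_i,c_i)$ with $C_i \in \sym^d_+$ and $c_i \geq 0$.

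The next step is to replace the recursively defined face $F_{i-1} \cap \{\ell_i = 0\}$ in the theorem by the face of $K$ directly exposed by a single functional, as required by the corollary. Since nice cones are facially exposed, each face of $K$ arising in the theorem's construction is exposed by some element of $K^\vee$; substituting these exposing functionals for the original $(C_i,c_i)$ yields the form of $F_i$ in the corollary, and properties (2)--(4) transfer literally from the corresponding items in Theorem~\ref{thm:infeasibilitycerts}.

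For the bound on $k$, the inequality $k \leq 1+\dim(L)$ follows immediately from the theorem and $\dim(L') = \dim(L)$. The sharper bound $k \leq d$ exploits the product structure of $K$: every face is of the form $G \times H$ with $G$ a face of $\sym^d_+$ and $H \in \{\{0\},\R_+\}$, and along the strictly descending chain $F_0 = K \supsetneq F_1 \supsetneq \ldots \supsetneq F_k$ the second factor can collapse from $\R_+$ to $\{0\}$ only once, which it must do by step $k$ since $F_k \subset \lin(L') \subset \sym^d \times \{0\}$. The key observation is that this collapse step can always be merged with a simultaneous strict decrease of the first factor, by choosing the exposing functional $(C,c)$ with both $C$ non-trivially exposing a proper subface of the current $G_{i-1}$ and $c > 0$. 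After this rearrangement, the first factors $\sym^d_+ = G_0 \supsetneq G_1 \supsetneq \ldots \supsetneq G_k$ form a strictly descending chain of faces of $\sym^d_+$, whose maximal length is exactly $d$. The main delicate point is precisely this merging argument, since a naive chain-length bound in the face lattice of $K$ gives only $k \leq d+1$.
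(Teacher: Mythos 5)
Your reduction to Theorem~\ref{thm:infeasibilitycerts} is exactly the paper's route: the hypothesis checks (regularity, niceness via Lemma~\ref{lem:niceconesdp}, $\codim(L')\geq 2$, and $(-K)\cap L'=\emptyset$ because every point of $L'$ has last coordinate $1$) are correct, and items (2)--(4) transfer because they are statements about the face sets $F_i$ and the spaces $L_i=W_i$, not about the particular functionals. Your passage from the theorem's recursively cut faces to faces of $K$ exposed by a single $(C_i,c_i)\in K^\vee$ is also legitimate (a face of a face is a face, and nice cones are facially exposed), though it imports an external fact of Pataki that the paper does not state; a self-contained alternative is to replace $\ell_i$ by $\ell_1+\cdots+\ell_i\in K^\vee$, whose exposed face of $K$ is precisely $F_i=K\cap\{\ell_1=0\}\cap\cdots\cap\{\ell_i=0\}$.

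The genuine gap is in your proof of $k\leq d$, which you yourself flag as the delicate point: the merging claim is false. It can happen that the smallest face of $F_{i-1}=G_{i-1}\times\R_+$ containing $K\cap L_i$ is $G_{i-1}\times\{0\}$, i.e., $K\cap L_i$ contains a point $(M,0)$ with $M\in\relint(G_{i-1})$. Any admissible $(C,c)$ is nonnegative on $F_{i-1}$ and vanishes on $K\cap L_i$, so $\scp{C,M}=0$ at this relative interior point forces $\scp{C,\cdot}$ to vanish on all of $G_{i-1}$; hence the exposed face contains $G_{i-1}\times\{0\}$ and the first factor \emph{cannot} strictly decrease at the collapse step, no matter how $(C,c)$ is chosen. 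This scenario actually occurs for the homogenization of the standard weakly infeasible LMI of Example~\ref{ex-standard}. Fortunately the bound survives without merging, because your premise that the naive count yields only $d+1$ is off by one: the terminal face $F_k=F$ has nonzero $\sym^d$-component. Indeed, if $(0,m)\in\wh{L'}$ with $m>0$, then $0\in L$ and $K\cap L'$ would be feasible; so $G_k\neq\{0\}$, the first factors can strictly drop at most $d-1$ times (ranks from $d$ down to at least $1$), the second factor collapses at most once, and $k\leq(d-1)+1=d$. This is precisely the paper's one-line argument: at each step either $\rk(C_i)>\rk(C_{i-1})$, or the scalar coordinate flips, and the flip occurs at most once.
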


\begin{proof}
The cone $K$ is nice by Lemma~\ref{lem:niceconesdp}. The bound of $d$ in (1) follows from the fact that the rank of $C_i$ is strictly greater than the rank of $C_{i-1}$ or $c_i$ is zero and $c_{i-1}$ is nonzero.
\end{proof}

We show later (Example~\ref{exm:longest}) that the bound $d-1$ in (1) is sharp in general and we give a geometric explanation in terms of the tangent cone.

As a consequence of Corollary~\ref{thm:sdpconpcert}, we get that the feasibility problem for semidefinite programs is in $\text{co-NP}_\R$. 

\begin{Thm}\label{thm:sdpconp}
  The feasibility problem for semidefinite programming is in $\text{NP}_\R \cap \text{co-NP}_\R$ (Blum-Shub-Smale model).
\end{Thm}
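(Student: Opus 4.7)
The plan is to exhibit polynomial-size witnesses in the Blum-Shub-Smale model for both feasibility and infeasibility of the SDP. For NP$_\R$, a feasible matrix $X\in\sym^d_+\cap L$ is itself the witness: the linear equalities $\scp{M_i,X}=b_i$ are verified in a polynomial number of arithmetic operations, and $X\succeq 0$ can be tested in polynomial time via a standard signature computation (for instance a symmetric $LDL^T$-type elimination on $X$, reading off the signs of the pivots). This direction is routine and independent of the machinery developed in Section~\ref{sec:certs}.

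For co-NP$_\R$, the idea is to use Corollary~\ref{thm:sdpconpcert} directly. Embed $\sym^d$ into $V=\sym^d\oplus\R$ via $X\mapsto(X,1)$, turning $L$ into an affine space $L'\subset V$ whose last coordinate is identically $1$, so that $(-K)\cap L'=\emptyset$ is automatic for $K=\sym^d_+\times\R_+$. When $K\cap L$ is infeasible, the corollary provides a sequence $(C_1,c_1),\dots,(C_k,c_k)\in \sym^d_+\times\R_+$ with $k\le d$; each $C_i$ has $O(d^2)$ entries, so the sequence has polynomially many real numbers. This is the intended infeasibility certificate. Verification proceeds iteratively: at step $i$ one checks $C_i\succeq 0$ and $c_i\ge 0$ (polynomial-time PSD test), maintains a description of the subspace $L_i$ (with $L_1=\wh{L'}$ and $L_{i+1}=L_i\cap\lspan(F_i)$, where the face $F_i$ of $K$ supported by $(C_i,c_i)$ is read off from the kernel of $C_i$ and the sign of $c_i$), checks that $(C_i,c_i)\in F_{i-1}^\vee$ and vanishes on $L_i$, and finally verifies the linear-subspace inclusion $F_k\subset\lin(L')$. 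All such steps reduce to PSD tests and elementary linear algebra on explicitly computed subspaces, hence run in polynomial time. Combining $F_k\subset\lin(L')$ with $F_k\supset K\cap\wh{L'}$ and $L'\cap\lin(L')=\emptyset$, Proposition~\ref{prop:feasibleandinfty} then certifies $K\cap L=\emptyset$.

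The main obstacle is not any single step but the bookkeeping around faces of $K=\sym^d_+\times\R_+$: one must set up an explicit correspondence between a face and the pair consisting of a linear subspace of $\R^d$ (the image of the members of the face) and a binary choice for the $\R_+$-factor, so that the nesting $F_i\supset F_{i+1}$, the dual containment $(C_{i+1},c_{i+1})\in F_i^\vee$, the vanishing on $L_i$, and the final inclusion $F_k\subset\lin(L')$ all translate into polynomial-time-verifiable linear algebraic conditions. Once this description is fixed, niceness of $K$ (Lemma~\ref{lem:niceconesdp}) together with the length bound $k\le d$ from Corollary~\ref{thm:sdpconpcert} guarantees that the entire certificate is of polynomial size and checkable in polynomial time in the BSS model, completing the proof of Theorem~\ref{thm:sdpconp}.
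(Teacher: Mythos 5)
Your proposal is correct and follows essentially the same route as the paper: membership in $\text{NP}_\R$ via a feasible point as witness checked by a polynomial-time positive-semidefiniteness test, and membership in $\text{co-NP}_\R$ via the homogenization $X\mapsto(X,1)$ into $K=\sym^d_+\times\R_+$ and the iterated facial-reduction certificate $(C_1,c_1),\ldots,(C_k,c_k)$ of Corollary~\ref{thm:sdpconpcert}, verified by PSD tests and linear algebra on the subspaces $L_i$ and faces $F_i$. Your verification description is in fact slightly more explicit than the paper's (e.g.\ checking that $(C_i,c_i)$ vanishes on $L_i$, and invoking Proposition~\ref{prop:feasibleandinfty} to conclude), but this is a matter of detail rather than a different approach.
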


\begin{proof}
Let us first recall that the feasibility problem for semidefinite programming is in $\text{NP}_\R$. Let $L = A_0 + \left\langle A_1, \ldots, A_n \right\rangle \subset \sym^d$ be the given affine space and let $n=\dim L$. Given $x \in \R^n$, evaluating $A(x) = A_0+\sum x_i A_i$ has a cost of $O(nd^2)$ and deciding whether $A(x) \succeq 0$ can be done in $O(d^3)$ (see \cite[Th. 25, (iii)]{RamanaMR1461379}).

To show that the feasibility problem is in $\text{co-NP}_\R$, we homogenize the problem as in Corollary~\ref{thm:sdpconpcert}
and use the certificate of infeasibility given there, which is of size at most $d\left(\binom{d+1}{2}+1\right)$ ($k$ symmetric $d \times d$ matrices $C_i$ and scalar $c_i$, with $k \leq d$). The conditions that the $C_i$ are positive semidefinite can be verified in polynomial time (as recalled above). The same is true for the inclusions $F_i\supset F_{i+1}$ because this can be checked in terms of the kernels of $C_i$ and $C_{i+1}$. Finally, $\lspan(F_k)\subset \lin(L')$ can also be checked in polynomial time by a computation of a basis of $\lspan(F_k)$.

%To show that the feasibility problem is in $\text{co-NP}_\R$, we distinguish two cases. If $L$ is an affine hyperplane, then $\sym^d_+ \cap L = \emptyset$ implies that every normal vector $C$ of $L$ is positive or negative semidefinite. Assume that $C$ is positive semidefinite and pick $M\in L$. With these assumptions, $\sym^d_+\cap L = \emptyset$ holds if and only if $\scp{C,M}<0$. The normal vector $C$ is given in the implicit form for semidefinite programs or it can be computed in polynomial time from a spanning set of $L$. Computing a matrix $M$ in $L$ and evaluating $\scp{C,M}$ can also be done in polynomial time.
%
%This leaves the case $\codim(L)\geq 2$. In this case, the certificate given by Theorem~\ref{thm:sdpconpcert} is of size $(n+1)\binom{d+1}{2}$ ($k$ symmetric $d \times d$ matrices, with $k \leq n+1$). The conditions that the $C_i$ are positive semidefinite can be verified in polynomial time (as recalled above). The same is true for the inclusions $F_i\supset F_{i+1}$ because this can be checked in terms of the kernels of $C_i$ and $C_{i+1}$. Finally, $\lspan(F_k)\subset \lin(U)$ can also be checked in polynomial time by a computation of a basis of $\lspan(F_k)$.
\end{proof}

\subsection{The viewpoint via tangent cones}\label{ssec:tangentcones}
To give a geometric explanation of why we need such a hierarchy of certificates for the feasibility problem for semidefinite programming (as opposed to the feasibility problem in linear programming, for instance), we discuss some general convexity theory (in particular tangent cones).

\begin{defi}
Let $K\subset V$ be a regular convex cone and let $F\subset K$ be a face. The \emph{tangent cone} to $K$ at $F$ is the convex cone
\[
TC_F (K) = \bigcap \left\{ \ol{H}^+ \colon K\subset \ol{H}^+, K \cap H \supset F \right\},
\]
the intersection of all closed half-spaces supporting $K$ in a face containing $F$.
\end{defi}

Equivalently, $TC_F (K)$ is the closure of the cone generated by all differences $w-v$ for a vector $v$ in the relative interior of $F$. The tangent cone determines what kind of supporting hyperplane to $K$ exists that separates $K$ and a linear space. We illustrate this fact for $K = \sym^3_+$. We discuss a geometric way to understand this example in the remainder of this section.

\begin{Exm}\label{exm:biggerface}
Consider the $2$-dimensional linear space
\[
L = \lspan\left\{
\begin{pmatrix}
1 & 0 & 0 \\
0 & 0 & 0 \\
0 & 0 & 0 
\end{pmatrix},
\begin{pmatrix}
0 & 0 & -1 \\
0 & 1 & 0 \\
-1 & 0 & 0 
\end{pmatrix}\right\}\subset \sym^3.
\]
The intersection $R = \sym^3_+ \cap L$ is the ray spanned by the first generator of $L$. Consider the tangent cone 
\[
TC_R (\sym^3_+) = \left\{
\begin{pmatrix}
* & * \\
* & B
\end{pmatrix}\colon B\in \sym^2_+
\right\}. 
\]
The intersection $TC_R(\sym^3_+) \cap L$ also contains the second generator of $L$. This geometric fact shows that there is not supporting hyperplane $H$ of $\sym^3_+$ separating $L$ and $\sym^3_+$ with $\sym^3_+ \cap H = R$. In fact, there is a unique supporting hyperplane $H$ of $\sym^3_+$ containing $L$, and its normal vector is
\[
C = \begin{pmatrix}
0 & 0 & 0 \\
0 & 0 & 0 \\
0 & 0 & 1
\end{pmatrix}.
\]
The intersection of $L$ with the span of the face of $\sym^3_+$ exposed by $C$ is the line spanned by the first generator of $L$.
\end{Exm}

\begin{Lem}[separation lemma]\label{lem:separationrefinement}
Let $K\subset V$ be a regular convex cone and let $F\subset K$ be a face. Let $\pi \colon V \to V/\lspan(F)$ be the canonical projection. The closure of $\pi(K)$ is exactly $\pi(TC_F (K))$.
\end{Lem}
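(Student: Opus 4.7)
The plan is to use the alternative description of the tangent cone noted right after its definition in the excerpt, namely $TC_F(K) = \clos(\R_+(K - v))$ for any fixed $v \in \relint(F)$, and to prove the two inclusions $\pi(TC_F(K)) \subset \clos(\pi(K))$ and $\clos(\pi(K)) \subset \pi(TC_F(K))$ separately.

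For the first inclusion, I would take $x \in TC_F(K)$ and write it as a limit of a sequence $\lambda_n(w_n - v)$ with $w_n \in K$ and $\lambda_n \geq 0$. Because $v \in F \subset \lspan(F) = \ker \pi$, applying $\pi$ gives $\pi(\lambda_n(w_n - v)) = \pi(\lambda_n w_n)$, which lies in $\pi(K)$ since $K$ is a cone. Continuity of $\pi$ then yields $\pi(x) \in \clos(\pi(K))$.

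The non-trivial direction $\clos(\pi(K)) \subset \pi(TC_F(K))$ reduces to showing that $\pi(TC_F(K))$ is already closed, because the trivial inclusion $K \subset TC_F(K)$ gives $\pi(K) \subset \pi(TC_F(K))$. The key observation is that $\lspan(F)$ is contained in the lineality space of $TC_F(K)$: for any $u \in \lspan(F)$, since $v \in \relint(F)$, both $v + \varepsilon u$ and $v - \varepsilon u$ lie in $F \subset K$ for all sufficiently small $\varepsilon > 0$, so $\pm u = \tfrac{1}{\varepsilon}((v \pm \varepsilon u) - v)$ lie in $\R_+(K - v) \subset TC_F(K)$. Consequently $TC_F(K) + \lspan(F) = TC_F(K)$, and therefore $\pi^{-1}(\pi(TC_F(K))) = TC_F(K)$. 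Since $\pi \colon V \to V/\lspan(F)$ is a continuous open surjection and $TC_F(K)$ is closed, its complement is open and saturated, and $\pi$ maps it onto the complement of $\pi(TC_F(K))$, proving the desired closedness.

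The main subtlety is precisely this closedness statement: projections of closed convex cones are not closed in general, but the fact that we are projecting modulo a subspace \emph{inside} the lineality space of the cone is exactly what rescues the argument.
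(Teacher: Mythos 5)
Your proof is correct, but it takes a genuinely different route from the paper's. The paper proves the lemma in a single chain of equalities via biduality: $\clos(\pi(K)) = (\pi(K)^\vee)^\vee$, then $\pi(K)^\vee = K^\vee \cap \lspan(F)^\perp$ under the identification $(V/\lspan(F))^\ast \cong \lspan(F)^\perp$, so the bidual is the intersection of all closed half-spaces $\ol{H}^+ \supset K$ with $\lspan(F)\subset H$, which coincides with the defining intersection for $TC_F(K)$ because a supporting hyperplane of $K$ contains $F$ if and only if it contains $\lspan(F)$. You instead argue primally, using the equivalent description $TC_F(K) = \clos\bigl(\R_+(K-v)\bigr)$ for $v\in\relint(F)$ (which the paper asserts without proof right after the definition, so citing it is legitimate), and prove the two inclusions directly. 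Both arguments are sound, and they in fact share one hidden ingredient: your saturation step --- $\lspan(F)$ lies in the lineality space of $TC_F(K)$, hence $\pi^{-1}(\pi(TC_F(K))) = TC_F(K)$, and the image of a closed saturated set under the open quotient map $\pi$ is closed --- is precisely what makes rigorous the paper's final identification of the half-space intersection, a cone living in $V$, with its image $\pi(TC_F(K))$ in the quotient; the paper leaves this implicit. What each approach buys: the duality computation is shorter and interfaces directly with the half-space definition of the tangent cone, while your argument is more elementary (no biduality needed) and isolates exactly why this particular projection of a closed convex cone stays closed, namely that one is quotienting by a subspace of the lineality space --- the subtlety you correctly flag, since projections of closed convex cones are not closed in general.
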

\begin{proof}
This follows from biduality: 
\begin{align*}
  \clos(\pi(K))
  & = (\pi(K)^\vee)^\vee = (K^\vee \cap \lspan(F)^\perp)^\vee = \\
  & = \bigcap \left\{ \ol{H}^+ \colon K\subset \ol{H}^+, \lspan(F)\subset H\right\} = \\
  & = \bigcap \left\{ \ol{H}^+ \colon K\subset \ol{H}^+, K \cap H\supset F \right\}
\end{align*}
where the dual at the end of the first line is taken with respect to $V/\lspan(F)$ using $(V/\lspan(F))^\ast \cong \lspan(F)^\perp\subset V^\ast$.
\end{proof}

For a description of the face lattice of the cone of positive semidefinite matrices used in the following well-known statement, we refer to \cite{barvinok}.
\begin{Lem}\label{lem:tcpsdcone}
  Let $F$ be a face of $\sym^d_+$ corresponding to a subspace $U \subset \R^d$ via the anti-isomorphism of the face lattice of $\sym^d_+$ with the lattice of subspaces of $\R^d$, given by $U \mapsto F_U = \{A\in\sym^d_+\colon U\subset \ker(A)\}$. Let $M$ be in the relative interior of $F$, and let $r = \rk(M)$. Then
\[
  TC_F (\sym^d_+) = \sym^d_+ + T_MV_r
\]
  where $T_M V_r$ is the tangent space at $M$ to the variety of symmetric matrices of rank at most $r$.
  In particular, $T_M V_r$ is the lineality space of $TC_F (\sym^d_+)$.

  Moreover, the intersection of the lineality space of $TC_F(\sym^d_+)$ with $\sym^d_+$ equals the face $F$, for every proper face $F$ of $\sym^d_+$.
\end{Lem}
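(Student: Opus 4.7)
The plan is to work in coordinates adapted to the face $F$. Since $M$ lies in the relative interior of $F = F_U$, its kernel is exactly $U$; choosing an orthonormal basis of $\R^d$ whose last $d-r$ vectors span $U$, the matrix $M$ takes block form with a positive definite top-left $r\times r$ block $M'$ and zeros elsewhere, and in the same blocking $F = F_U$ consists of the PSD matrices supported on the top-left $r\times r$ block. The first step is to compute $TC_F(\sym^d_+)$ directly from its definition as an intersection of supporting halfspaces. By self-duality of $\sym^d_+$, every supporting halfspace has normal $N\in\sym^d_+$, and the face it exposes is $\{X\in\sym^d_+\colon NX = 0\} = F_{\mathrm{im}(N)}$. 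Via the anti-isomorphism of the face lattice this face contains $F_U$ if and only if $\mathrm{im}(N)\subset U$, equivalently $N = \left(\begin{smallmatrix} 0 & 0 \\ 0 & N''\end{smallmatrix}\right)$ with $N''\in\sym^{d-r}_+$. Intersecting the resulting halfspaces over all such $N''$ and invoking self-duality of $\sym^{d-r}_+$ forces the bottom-right block of $X$ to be PSD, yielding
\[
TC_F(\sym^d_+) = \left\{\begin{pmatrix} A & B \\ B^T & C\end{pmatrix}\colon A\in\sym^r,\ B\in\R^{r\times(d-r)},\ C\in\sym^{d-r}_+\right\}.
\]

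Next I would identify the tangent space $T_MV_r$. Since $\rk(M) = r$, the point $M$ is smooth on the determinantal variety $V_r$, and a neighborhood of $M$ in $V_r$ is parametrized in the chosen blocking by $(A,B)\mapsto \left(\begin{smallmatrix} A & B \\ B^T & B^TA^{-1}B\end{smallmatrix}\right)$ for $A$ close to $M'$, by the Schur complement formula. Differentiating at $(A,B) = (M',0)$ kills the quadratic term $B^T A^{-1} B$ and gives
\[
T_MV_r = \left\{\begin{pmatrix} \dot A & \dot B \\ \dot B^T & 0\end{pmatrix}\colon \dot A\in\sym^r,\ \dot B\in\R^{r\times(d-r)}\right\}.
\]
The equality $TC_F(\sym^d_+) = \sym^d_+ + T_MV_r$ is then immediate by block decomposition: any element of $TC_F(\sym^d_+)$ splits as $\left(\begin{smallmatrix} 0 & 0 \\ 0 & C\end{smallmatrix}\right) + \left(\begin{smallmatrix} A & B \\ B^T & 0\end{smallmatrix}\right)$, the first summand in $\sym^d_+$ and the second in $T_MV_r$; the reverse inclusion is obvious since both $\sym^d_+$ and $T_MV_r$ are contained in $TC_F(\sym^d_+)$.

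Finally, the lineality space $TC_F(\sym^d_+) \cap \bigl(-TC_F(\sym^d_+)\bigr)$ consists of matrices whose bottom-right block $C$ satisfies both $C\succeq 0$ and $-C\succeq 0$, forcing $C = 0$; hence the lineality space equals $T_MV_r$. For the last assertion, a PSD matrix with $C = 0$ must also have $B = 0$ by the classical fact that a zero diagonal entry in a PSD matrix forces the corresponding row and column to vanish, so every element of $T_MV_r \cap \sym^d_+$ has $U$ in its kernel and lies in $F_U = F$; the reverse inclusion $F\subset T_MV_r\cap\sym^d_+$ is trivial. The main work is in the first paragraph — pinning down $TC_F(\sym^d_+)$ via its supporting halfspaces — but once the adapted coordinates and the anti-isomorphism of the face lattice are in hand, the rest reduces to direct block computations.
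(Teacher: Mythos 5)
Your proof is correct and follows essentially the same route as the paper: choose coordinates adapted to $U$ so that $F$ sits in the top-left $r\times r$ block, identify $TC_F(\sym^d_+)$ as the matrices with positive semidefinite bottom-right block, identify $T_MV_r$ as the matrices with vanishing bottom-right block, and combine. The only difference is one of completeness, in your favor: the paper merely asserts the block descriptions of the tangent cone and of $T_MV_r$ and says the claim follows, whereas you actually derive them (the tangent cone via self-duality and the exposed-face correspondence $N\mapsto F_{\mathrm{im}(N)}$, the tangent space via the Schur-complement parametrization) and spell out the lineality-space computation and the final assertion $T_MV_r\cap\sym^d_+ = F$ using the zero-diagonal argument.
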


\begin{proof}
Up to conjugation by the orthogonal group, we can assume that $U$ is the coordinate subspace defined by the linear equations $x_{d - r+1} = 0, x_{d - r+2} = 0, \ldots, x_d = 0$. That is $U = \lspan(x_1,\ldots,x_{d-r})$ and $F_U$ is the set of matrices of the form
\[
M = 
\begin{pmatrix}
M' & 0 \\
0 & 0 
\end{pmatrix}
\]
where $M' \succeq 0$ and has size $r \times r$. So the tangent cone to $\sym^d_+$ at $F_U$ is
\[
TC_{F_U}(\sym^d_+) = \left\{
\begin{pmatrix}
* & * \\
* & B
\end{pmatrix}\colon B\in\sym^{d-r}_+
\right\}.
\]
On the other hand, the tangent space $T_M V_r$ to the variety of matrices of rank at most $r$ at $M$ is the linear space of all matrices whose bottom right $(d-r)\times (d-r)$ block is $0$. These two facts combined give the claim. 
\end{proof}

\begin{Cor}
Let $L\subset \sym^d$ be a linear space and let $F\subset \sym^d_+$ be the smallest face of $\sym^d_+$ containing $\sym^d_+ \cap L$. There exists a supporting hyperplane $H$ of $\sym^d_+$ with $L\subset H$ and $\sym^d_+ \cap H = F$ if and only if $TC_F (\sym^d_+) \cap L$ is contained in the lineality space of $TC_F (\sym^d_+)$.
\end{Cor}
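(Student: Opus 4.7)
The plan is to deduce both directions from Lemma~\ref{lem:tcpsdcone} and Lemma~\ref{lem:separationrefinement}. Recall from Lemma~\ref{lem:tcpsdcone} that $TC_F(\sym^d_+) = \sym^d_+ + T_M V_r$, that $T_M V_r$ is its lineality space, and crucially that $T_M V_r \cap \sym^d_+ = F$.

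\textbf{Forward direction.} If $H = \{X \in \sym^d \colon \scp{C, X}=0\}$ is the hyperplane in question, with $C \in \sym^d_+$ (by self-duality), then $C \in (\sym^d_+)^\vee \cap F^\perp = (TC_F(\sym^d_+))^\vee$ vanishes in particular on the lineality $T_M V_r$. Given $x \in TC_F(\sym^d_+) \cap L$, I would decompose $x = a + b$ with $a \in \sym^d_+$ and $b \in T_M V_r$. Then $\scp{C, b}=0$ and $\scp{C, x}=0$ (the latter since $x \in L \subset H$), so $\scp{C, a}=0$; because $C$ exposes precisely the face $F$, this forces $a \in F \subset T_M V_r$ and hence $x \in T_M V_r$.

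\textbf{Reverse direction.} Assume $TC_F(\sym^d_+) \cap L \subset T_M V_r$, and let $\pi \colon \sym^d \to \sym^d / T_M V_r$ be the canonical projection. Since $T_M V_r \subset TC_F(\sym^d_+)$ is the lineality space and $TC_F(\sym^d_+) = \sym^d_+ + T_M V_r$, the image $\pi(\sym^d_+) = \pi(TC_F(\sym^d_+))$ is the image of a closed convex cone by the quotient modulo its lineality, which is a \emph{closed pointed} convex cone. I would then check $\pi(L) \cap \pi(\sym^d_+) = \{0\}$: if $\pi(x) = \pi(y)$ with $x \in L$ and $y \in \sym^d_+$, then $x - y \in T_M V_r$ gives $x \in \sym^d_+ + T_M V_r = TC_F(\sym^d_+)$, so $x \in L \cap TC_F(\sym^d_+) \subset T_M V_r$ by hypothesis, and $\pi(x)=0$. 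A standard Hahn--Banach argument then produces a linear functional $\bar\varphi \in \tint(\pi(\sym^d_+)^\vee)$ vanishing on $\pi(L)$: if $\tint(\pi(\sym^d_+)^\vee) \cap \pi(L)^\perp$ were empty, separating these two sets would yield a non-zero vector in $\pi(L) \cap \pi(\sym^d_+)$, a contradiction. Pulling $\bar\varphi$ back through $\pi$ and identifying $\sym^d_+$ with its dual produces $C \in \sym^d_+$ with $L \subset \{\scp{C, \cdot}=0\}$; and for any $X \in \sym^d_+$, the chain $\scp{C, X}=0 \iff \bar\varphi(\pi(X))=0 \iff \pi(X)=0 \iff X \in T_M V_r \cap \sym^d_+ = F$ identifies the exposed face.

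\textbf{Main obstacle.} The technical heart lies in the reverse direction, specifically in establishing that $\pi(\sym^d_+) \subset \sym^d / T_M V_r$ is both closed \emph{and} pointed. This is what allows Hahn--Banach to produce a separator lying in the \emph{interior} of the dual cone, which in turn ensures that the lifted $C$ exposes exactly $F$ rather than a strictly larger face. Closedness itself rests on $T_M V_r$ genuinely being the lineality space of $TC_F(\sym^d_+)$, as guaranteed by Lemma~\ref{lem:tcpsdcone}.
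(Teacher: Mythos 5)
Your proof is correct, and it takes a genuinely different route from the paper's. The paper runs both implications through the single quotient $\pi\colon \sym^d \to \sym^d/\lspan(F)$ and the separation lemma (Lemma~\ref{lem:separationrefinement}), which identifies $\clos(\pi(\sym^d_+))$ with $\pi(TC_F(\sym^d_+))$; the reverse direction there selects a supporting hyperplane of $\ol{\pi(\sym^d_+)}$ meeting it exactly in its lineality space and pulls it back. You do two things differently. First, your forward direction is a direct dual-side computation: the identity $(TC_F(K))^\vee = K^\vee \cap F^\perp$ holds for any regular cone (since $TC_F(K) = (K^\vee\cap F^\perp)^\vee$ by definition, and $K^\vee\cap F^\perp$ is already a closed convex cone, biduality gives it), so the normal $C$ annihilates the lineality space $T_M V_r$, and the decomposition $TC_F(\sym^d_+) = \sym^d_+ + T_M V_r$ together with $T_M V_r \cap \sym^d_+ = F$ from Lemma~\ref{lem:tcpsdcone} closes the argument. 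Second, your reverse direction quotients by the full lineality space $T_M V_r$ rather than by $\lspan(F)$: since $\pi(\sym^d_+) = \pi(TC_F(\sym^d_+))$ and a closed convex cone splits as the direct sum of its lineality space and a pointed closed cone, the image is closed and pointed outright --- so instead of hunting for a supporting hyperplane exposing exactly the lineality space, you can take any functional in $\tint\bigl(\pi(\sym^d_+)^\vee\bigr)$ vanishing on $\pi(L)$, and your separation argument for its existence is sound. This buys a separator that is strictly positive on $\sym^d_+\setminus F$ (in the spirit of Corollary~\ref{cor:interiorsep}), making the exactness $\sym^d_+ \cap H = F$ automatic, where the paper instead argues that $\sym^d_+\cap H$ lies in the lineality space and then invokes $TC_F(\sym^d_+)\cap \sym^d_+ = F$; the cost is the extra convex-geometric bookkeeping (closedness and pointedness of the quotient cone), which you correctly identify as the crux and adequately justify via Lemma~\ref{lem:tcpsdcone}. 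One shared caveat: both your proof and the paper's tacitly assume $F$ is a proper face --- if $L$ meets $\tint(\sym^d_+)$ then $TC_F(\sym^d_+) = \sym^d$, the hypothesis holds vacuously, and no supporting hyperplane contains $L$ --- so this is not a gap relative to the paper, but worth stating if you write this up.
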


\begin{proof}
We consider the canonical projection $\pi\colon V\to V/\lspan(F)$. The existence of a supporting hyperplane $H$ of $\sym^d_+$ with $L\subset H$ and $\sym^d_+\cap H = F$ is equivalent to the existence of a supporting hyperplane $\ol{H}\subset V/\lspan(F)$ of $\pi(\sym^d_+)$ with $\pi(L)\subset \ol{H}$ and $\pi(\sym^d_+)\cap\ol{H}  = \{0\}$. By Lemma~\ref{lem:tcpsdcone}, the closure of $\pi(\sym^d_+)$ is $\pi(TC_F (\sym^d_+))$. So $\pi(\sym^d_+)\cap\ol{H}  = \{0\}$ and $\pi(L)\subset \ol{H}$ imply that $L$ is contained in the lineality space of $TC_F(\sym^d_+)$.
Conversely, there exists a supporting hyperplane $\ol{H}$ of $\ol{\pi(\sym^d_+)}$ such that $\ol{\pi(\sym^d_+)}\cap\ol{H} $ is the lineality space of $\ol{\pi(\sym^d_+)}$. So, 
if $L$ is contained in the lineality space of $TC_F(\sym^d_+)$, there is a supporting hyperplane $H = \pi^{-1}(\ol{H})$ of $\sym^d_+$ that contains $L$ and $\sym^d_+\cap H$ is contained in the intersection of the lineality space of $TC_F(\sym^d_+)$ with $\sym^d_+$. By Lemma~\ref{lem:tcpsdcone}, we have 
\[
TC_F(\sym^d_+) \cap \sym^d_+ = F
\]
and we conclude.
\end{proof}

We can extend Example~\ref{exm:biggerface} to show that the bound $d-1$ for the length of the iterative infeasibility certificate in Corollary~\ref{thm:sdpconpcert} is tight, using the tangent cone. The original example is the special case of the following for $d=3$.

\begin{Exm}\label{exm:longest}
Let $E_{11}$ be the matrix whose $(1,1)$ entry is $1$ and all other entries are equal to $0$. For $i\in\{2,\ldots,d-1\}$, set $A_i$ to be the matrix whose $(i,i)$ entry is $1$, whose $(1,i+1)$ and $(i+1,1)$ entries are $-1$ and all others equal to $0$. Let $L$ be the linear space spanned by $E_{11}$ and $A_2,A_3,\ldots,A_{d-1}$. Similar to Example~\ref{exm:biggerface}, there is a unique supporting hyperplane to $\sym^d_+$ that contains $L$. Namely, its normal vector $C_1$ is the matrix whose $(d,d)$ entry is $1$ (and all others are $0$). So we now intersect with the span of the face supported by $C_1$, which is to say that we set the last row and column equal to $0$. The intersection of $L$ with this linear space it spanned by $E_{11},A_2,A_3,\ldots,A_{d-2}$. By induction, we see that the infeasibility certificate as in Corollary~\ref{thm:sdpconpcert} needs $k = d-1$.
\end{Exm}

The main difference between the cone $\sym^d_+$ of positive semidefinite matrices and the positive orthant $(\R_{+})^n$ from the point of view of this chapter, is in the tangent cones to proper faces. The tangent cone of $(\R_{+})^n$ at a proper face $F$ is simply $(\R_{+})^n + \lspan(F)$, i.e.~the lineality space is the span of the face itself. For the cone $\sym^d_+$, the lineality space of $TC_F (\sym^d_+)$ is bigger than just the span of the face. These tangent directions prevent an immediate separation that is possible in the polyhedral case. This can be seen as the geometric reason for the differences between the two cases in terms of Theorems of the Alternative (Farkas Lemma in LP vs.~Ramana's Extended Lagrange-Slater Dual).

\subsection{An alternative homogenization of SDPs}\label{ssec:fullcone}
In this final section, we give a characterization of infeasible semidefinite programs, based on a lift of the cone $\sym^d_+$ to the larger semidefinite cone $\sym^{d+2n}_+$. It relies on an alternative way to homogenize linear matrix inequalities, which was used in \cite{NetzerSinn}.
As before, let $L = A_0 + \langle A_1,\ldots,A_n \rangle$ and let $\wh{L} = \langle A_0, A_1,\ldots,A_n \rangle$ be the linear span of $L$. We also assume that $A_1,A_2,\ldots,A_n$ are linearly independent so that $\dim(L) = n$. Then we have
\[
\sym^d_+ \cap \wh{L} = \left\{ X \in \sym^d \mymid X \succeq 0, \,\, \exists\,x_i\in\R, \, X = x_0 A_0 + x_1 A_1 + \cdots + x_n A_n \right\}.
\]

\begin{Thm}
\label{thm:HomSDP}
The program $\sym^d_+\cap L$ is infeasible if and only if
\begin{equation}
\label{liftedSDP}
\left\{(X,r) \in \sym^d \times \R \mymid X \in \sym^d_+\cap\wh{L}, \,\, 
\scalebox{0.7}{
$\begin{bmatrix}
x_0 & x_1 \\
x_1 & r
\end{bmatrix}$} \oplus \cdots\oplus
\scalebox{0.7}{
$\begin{bmatrix}
x_0 & x_n \\
x_n & r
\end{bmatrix}$} \succeq 0 \right\} = \{0\} \times \R_+.
\end{equation}
Above, $\oplus$ denotes the block sum of the $2\times 2$ matrices into a $2n \times 2n$ matrix.
\end{Thm}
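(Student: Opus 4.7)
The plan is to unpack the block-diagonal semidefiniteness constraint into a handful of scalar inequalities and then run a short case analysis on the homogenizing variable $x_0$. Since $\bigoplus_{i=1}^n \begin{pmatrix} x_0 & x_i \\ x_i & r \end{pmatrix}$ is block-diagonal, it is positive semidefinite iff each $2\times 2$ block is, i.e.\ iff $x_0 \geq 0$, $r \geq 0$, and $x_i^2 \leq x_0 r$ for each $i$. The choice $X=0$ and $x_0=x_1=\cdots=x_n=0$ always meets these conditions together with $X\in\sym^d_+\cap\wh{L}$, so $\{0\}\times\R_+$ is contained in the left-hand set unconditionally; the content of the theorem is the reverse inclusion.

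For the direction $(\Rightarrow)$, I would take any $(X,r)$ in the set, write $X=x_0 A_0+x_1 A_1+\cdots+x_n A_n$ (the representation being unique since $L$ is a proper affine subspace, so $A_0,A_1,\ldots,A_n$ are linearly independent), and split on whether $x_0$ vanishes. If $x_0>0$, then $x_0^{-1}X = A_0+\sum_{i\geq 1}(x_i/x_0)A_i$ lies in $L$ and is still PSD (positive scaling preserves $\sym^d_+$), contradicting $\sym^d_+\cap L=\emptyset$. If $x_0=0$, the inequalities $x_i^2 \leq x_0 r = 0$ force every $x_i$ to vanish, giving $X=0$ and hence $(X,r)\in\{0\}\times\R_+$. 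For $(\Leftarrow)$, I would argue by contrapositive: starting from a feasible point $X^*=A_0+\sum x_i^* A_i\in\sym^d_+\cap L$, setting $x_0=1$, $x_i=x_i^*$, and choosing any $r>\max_i(x_i^*)^2$ makes both semidefiniteness constraints hold, so $(X^*,r)$ is in the left-hand set; linear independence of $A_0$ from the $A_i$ guarantees $X^*\neq 0$, placing this point outside $\{0\}\times\R_+$.

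The proof is not technically difficult; the only delicate point is the standing hypothesis that $L$ is a proper affine subspace (so that $0\notin L$ and $A_0,A_1,\ldots,A_n$ are jointly linearly independent). This is what makes the representation $X=\sum x_i A_i$ unique and what guarantees the nonvanishing of any feasible $X^*$ in the converse direction. Without this hypothesis, the origin would itself be a trivial feasible point and the characterization would need modification, so the assumption is both natural and necessary for the statement as given.
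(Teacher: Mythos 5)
Your proof is correct and follows essentially the same route as the paper's: the same case split on the homogenizing coordinate $x_0$ (rescaling by $x_0^{-1}$ when $x_0>0$, and using $x_i^2\leq x_0 r=0$ to force $X=0$ when $x_0=0$), and the same choice of a large $r$ to witness a nonzero point in the lifted set when a feasible $X^*\in\sym^d_+\cap L$ exists. You are merely more explicit than the paper about the standing hypotheses --- that $L$ is proper, so that $A_0,A_1,\ldots,A_n$ are jointly linearly independent and $X^*\neq 0$ --- which the paper inherits implicitly from its homogeneous setup (``as before'').
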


\begin{myproof}
  Let $(X,r)$ be in the set in \ref{liftedSDP}. Since $X \in \wh{L}$, there is $(x_0,x_1,\ldots,x_n) \in \R^{n+1}$
  with $X = x_0 A_0 + \sum_{i=1}^n x_iA_i$. From the semidefinite constraint on the $2 \times 2$ blocks we deduce
  that $x_0 \geq 0$. If $x_0 = 0$, then the $2 \times 2$ blocks being positive semidefinite imply that
  $(x_0,x_1,\ldots,x_n) = 0$. If $x_0 > 0$, then we can rescale to get a point $A_0 + \sum_i (x_i/x_0) A_i$ in
  $\sym^d_+ \cap L$.
  We deduce that $\sym^d_+ \cap L$ is infeasible if and only if the projection of the set \eqref{liftedSDP} in
  $\sym^d$ is $\{0\}$. Over this point, again by the additional semidefinite constraints, $r$ can take any nonnegative value.
\end{myproof}

\begin{Rem}\label{rem:complexityhomogSDP}
The size of the additional $2n \times 2n$ semidefinite constraint in the set \eqref{liftedSDP} of Theorem~\ref{thm:HomSDP} grows linearly in the dimension of $L$ and one needs to add a constant number of variables (namely, $2$) with respect to the original linear matrix inequality. This implies that the extra cost for checking the condition of Theorem~\ref{thm:HomSDP}, that can be used, combined with the homogenization and Theorem~\ref{thm:homogenizationmain}, to compute the feasibility type of $K \cap L$, is controlled. Moreover the lifted LMI in \eqref{liftedSDP} is defined over the same field as that of original one.
\end{Rem}

\begin{Example}[Example \ref{ex-standard} continued]
  Homogenizing the linear matrix inequality in Example \ref{ex-standard} following Theorem~\ref{thm:HomSDP}, we get the
  homogeneous linear matrix inequality
  \[
  A^{(h)}(x_0,x_1,r) =
  \left[
  \begin{array}{cc}
    0 & x_0 \\
    x_0 & x_1
  \end{array}
  \right]
  \bigoplus
  \left[
  \begin{array}{cc}
    x_0 & x_1 \\
    x_1 & r
  \end{array}
  \right]
  \succeq 0.
  \]
  Recall that Theorem~\ref{thm:HomSDP} predicts that the unique solution to the homogenized linear matrix inequality above is the ray $(x_{0},x_{1},r)=(0,0,r)$.
 % Minimizing generic linear forms on this linear matrix inequality, we get either error ``Unbounded function'', or an approximation of the feasible solutions $(0,0,r)$ (with good precision). This reflects the fact that a linear function $\ell$ is either unbounded from below on a cone or its minimum is zero, and it is attained at the origin.
\end{Example}

\begin{Example}[Example~4.6.2 in \cite{klepschweigMR3092548}]
  Consider the linear matrix inequality
    \[
  A(x_{1},x_{2}) =
  \left[
  \begin{array}{ccc}
    0 & x_{1} & 0 \\
    x_{1} & x_{2} & 1 \\
    0 & 1 & x_{1}
  \end{array}
  \right]\succeq 0.
  \]
  This is weakly infeasible, but without a {\it linear certificate} in the sense of \cite[Definition~4.3.2 and Remark~4.3.6]{klepschweigMR3092548}. Indeed, it follows by \cite{klepschweigMR3092548} that one can associate to the linear matrix inequality $A \succeq 0$ a quadratic module $M_A$, containing polynomials that are positive over the associated spectrahedron. The infeasibility certificate is given by the membership $-1 \in M_A$, which contradicts the feasibility of the linear matrix inequality. Klep and Schweighofer show in \cite[Example~4.6.2]{klepschweigMR3092548} that the SOS-multipliers in the membership certificate $-1 \in M_A$ have degree at least $4$ for this example (so squares of linear forms are not enough).

  Applying the homogenization scheme of Theorem~\ref{thm:HomSDP}, we get
    \[
  A^{(h)}(x_{0},x_{1},x_{2},r) =
  \left[
  \begin{array}{ccc}
    0 & x_{1} & 0 \\
    x_{1} & x_{2} & x_{0} \\
    0 & x_{0} & x_{1}
  \end{array}
  \right]
    \bigoplus
  \left[
  \begin{array}{cc}
    x_0 & x_1 \\
    x_1 & r
  \end{array}
  \right]
  \bigoplus
  \left[
  \begin{array}{cc}
    x_0 & x_2 \\
    x_2 & r
  \end{array}
  \right]
\succeq 0.
\]
One can check by hand that this linear matrix inequality has as solution the half-line $(0,0,0,r)$, with $r\geq 0$. 
Hence we deduce that the original linear matrix inequality is infeasible.
\end{Example}

\section*{Acknowledgements} We would like to thank Didier Henrion and Levent Tun\c{c}el for very helpful discussions as well as Thorsten Theobald and Greg Blekherman for comments. This research benefited from the support of the Fondation Math\'ematique Jacques Hadamard through the Programme Gaspard Monge pour l'Optimization (PGMO), project number 2018-0061H.

%\newpage
\bibliography{lit}
\bibliographystyle{abbrv}

\newpage
\appendix
\section{Grassmannian}\label{sec:appGR}
In this section, we want to summarize useful facts about the real and affine Grassmannians and give detailed pointers to the literature. The section includes proofs of facts that we have used in preceding sections, most importantly Section~\ref{sec:homog}.
 
We begin with a technically precise explanation of what we mean by the Grassmannian of $d$-dimensional affine subspaces of $\R^n$ based on the construction in geometry for the projective case.

\begin{Rem}\label{rem:affineGrassmannian}
Denote by $\P^n$ the $n$-dimensional real projective space (often denoted $\R\P^n$ or $\P^n(\R)$), i.e.~$\R^{n+1}\setminus\{0\}/\R^*$, where $\R^* = \R\setminus\{0\}$ acts diagonally on $\R^{n+1}$. We can specify a point of $\P^n$ by homogeneous coordinates $(x_0:x_1:\ldots:x_n)$, not all $x_i$ equal to $0$. These coordinates represent the equivalence class $t(x_0,x_1,\ldots,x_n)$, $t\in\R^*$, i.e.~the line spanned by the vector $(x_0,x_1,\ldots,x_n)\in\R^{n+1}$. A $d$-dimensional linear subspace $L$ of $\P^n$ is a subset of points that come from a $(d+1)$-dimensional linear space $\wh{L}\subset\R^{n+1}$, i.e.~$L = \wh{L}/\R^*$. Such a linear space can be generated by $d+1$ vectors $v_0,v_1,\ldots,v_d$, namely a basis of $\wh{L}$.

In this way, the coordinates on $\R^{n+1}$ give local coordinates on the Grassmannian $\pgr(d,n)$ of $d$-dimensional subspaces of $\P^n$. Indeed, we represent a $d$-dimensional linear subspace $L$ of $\P^n$ by the matrix
\[
\begin{pmatrix}
v_0 & v_1 & \ldots & v_d
\end{pmatrix},
\]
where $v_0,v_1,\ldots,v_d$ is any basis of $\wh{L}\subset\R^{n+1}$. Of course, a different basis should represent the same point in $\pgr(d,n)$. Therefore, we mod out the equivalence relation of column operations, which take us from one basis of $\wh{L}$ to any other. So if we write $V_{d,n}$ for the set of $(n+1)\times(d+1)$ matrices of rank $d+1$, the Grassmannian is
\[
\pgr(d,n) = V_{d,n}/\gl_{d+1}(\R),
\]
where $\gl_{d+1}(\R)$ is the general linear group of invertible $(d+1)\times(d+1)$ real matrices.

Based on this projective discussion, we want to explain the Grassmannian $\gr(d,n)$ of $d$-dimensional affine subspaces of $\R^n$. For this, we fix the embedding
\[
\iota\colon \left\{
\begin{array}[]{c}
\R^n\to \P^n \\
(x_1,x_2,\ldots,x_n) \mapsto (1:x_1:x_2:\ldots:x_n)
\end{array}
\right.
\]
so that the hyperplane $H_0 = \{(x_0:x_1:\ldots :x_n)\in\P^n\colon x_0 = 0\}$ plays the special role of the ``hyperplane at infinity''.
A $d$-dimensional affine subspace $L = \aff(v_0,v_1,\ldots,v_d)\subset\R^n$defines via $\iota$ a $d$-dimensional linear subspace $L_+$ of $\P^n$, namely $\wh{L_+} = \lspan(L)$. A basis of this projective linear space is $\{\iota(v_i)\colon i = 0,1,\ldots,d\}$. Conversely, every $d$-dimensional projective linear space that is not contained in $H_0$ comes from a unique $d$-dimensional affine subspace of $\R^n$ by the above construction. By the Grassmannian $\gr(d,n)$ of $d$-dimensional affine subspaces of $\R^n$, we mean the complement of the $d$-dimensional projective subspaces contained in $H_0$ in $\pgr(d,n)$. Technically, this is a quasi-projective variety. More importantly, it is an open subset of $\pgr(d,n)$ and as such a smooth manifold.
\end{Rem}

Above, we need a basic topological fact that we prepare here. We use the dual projective space $(\P^n)^*$ of hyperplanes in $\P^n$, where we identify a hyperplane with its normal vector. Since a normal vector of a hyperplane is uniquely determined up to non-zero scaling, this is indeed a point in an $n$-dimensional projective space.

\begin{lem}\label{lem:neighborhoodGrassmannian}
Let $U$ be an open set in $(\P^n)^*$ of hyperplanes in $\P^n$. Then the set of $d$-dimensional projective subspaces of $\P^n$ that are contained in a hyperplane $H$ that lies in $U$ is an open subset of $\pgr(d,n)$.
\end{lem}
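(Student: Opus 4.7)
My approach is to exhibit the set in question as the image of an open set under an open map. Introduce the incidence variety
\[
I = \{(L, H) \in \pgr(d,n) \times (\P^n)^* : L \subset H\},
\]
with coordinate projections $\pi_1 : I \to \pgr(d,n)$ and $\pi_2 : I \to (\P^n)^*$. The set from the statement is exactly $\pi_1(\pi_2^{-1}(U))$. Since $\pi_2$ is continuous, $\pi_2^{-1}(U)$ is open in $I$, so it suffices to check that $\pi_1$ is an open map.

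The conceptual reason this is true is that $\pi_1$ is a projective bundle: its fiber over $L$ is the projective space $\{H\in(\P^n)^* : H\supset L\}$, of constant dimension $n-d-1$. To give a hands-on proof, I would construct a continuous local section of $\pi_1$ through an arbitrary $(L_0,H_0)\in I$. In a local chart of $\pgr(d,n)$ around $L_0$ as in Remark~\ref{rem:affineGrassmannian}, represent $L_0$ by a matrix $M_0\in\R^{(d+1)\times(n+1)}$ of full rank $d+1$ and $H_0$ by a nonzero vector $h_0\in\ker(M_0)$. For $M$ near $M_0$ the rank remains $d+1$, so $\ker(M)$ keeps dimension $n-d$ and varies continuously with $M$. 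Fixing a linear form $\ell$ on $\R^{n+1}$ with $\ell(h_0)=1$, the intersection $\ker(M)\cap\{h : \ell(h) = 1\}$ remains a non-empty affine subspace for $M$ close enough to $M_0$, and its minimum-norm point $h(M)$ depends continuously on $M$. Passing to projective classes, $L\mapsto [h(M(L))]$ is a continuous local section of $\pi_1$ sending $L_0$ to $H_0$. Since $H_0\in U$ and $U$ is open in $(\P^n)^*$, this section maps a neighborhood of $L_0$ into $U$, producing the required open neighborhood of $L_0$ contained in $\pi_1(\pi_2^{-1}(U))$.

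The only step that requires some care is the continuous dependence of $\ker(M)$ on $M$ under the constant rank assumption; this is standard linear algebra, available via the implicit function theorem or an explicit block-inversion of a full-rank $(d+1)\times(d+1)$ submatrix of $M_0$. Everything else in the argument is formal, so I expect no serious obstacle beyond this routine verification.
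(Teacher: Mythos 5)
Your route is essentially the paper's: both proofs introduce the incidence correspondence $\Sigma=\{(L,H)\colon L\subset H\}\subset\pgr(d,n)\times(\P^n)^*$, observe that the set in question is $\pi_1(\pi_2^{-1}(U))$ with $\pi_2^{-1}(U)$ open by continuity of $\pi_2$, and then exploit the local structure of $\pi_1$. The paper does this by writing an explicit local trivialization exhibiting $\Sigma$ as a projective bundle over $\pgr(d,n)$ with fibers of dimension $n-d-1$ and then quoting that coordinate projections are open; you instead build a continuous local section of $\pi_1$ through a given point $(L_0,H_0)$, which is a weaker but perfectly sufficient substitute, since pushing a neighborhood of $L_0$ through the section and $\pi_2$ lands it in $U$.

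However, one step as written fails: the minimum-norm point of $\ker(M_0)\cap\{h\colon\ell(h)=1\}$ is the orthogonal projection of the \emph{origin} onto that affine slice, and there is no reason for it to be proportional to $h_0$. For instance, with $\ker(M_0)=\lspan(e_1,e_2)\subset\R^3$, $h_0=e_1+e_2$ and $\ell(h)=h_1$, the slice is $e_1+\R e_2$ and its minimum-norm point is $e_1$, which is projectively different from $h_0$. So your section passes through $(L_0,[h(M_0)])$ rather than $(L_0,H_0)$, and since $H_0$ is the only hyperplane you know to lie in $U$, the concluding sentence ``this section maps a neighborhood of $L_0$ into $U$'' does not follow. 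The repair is immediate: take $h(M)$ to be the point of $\ker(M)\cap\{\ell=1\}$ nearest to $h_0$ (equivalently, orthogonally project $h_0$ onto $\ker(M)$ and rescale so that $\ell$ takes the value $1$, which is possible for $M$ near $M_0$). This still depends continuously on $M$, and now $h(M_0)=h_0$, so the section genuinely passes through $(L_0,H_0)$ and the rest of your argument, including the constant-rank continuity of $\ker(M)$ and the nonemptiness of the slice near $M_0$, goes through as you describe.
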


\begin{proof}
Consider the incidence correspondence
\[
\Sigma = \{(L,[H])\colon L\subset H\}\subset \pgr(d,n) \times (\P^n)^*
\]
of $d$-dimensional projective spaces $L$ and hyperplanes $H\subset\P^n$ such that $L$ is contained in $H$; together with the two projections $\pi_1\colon \pgr(d,n)\times (\P^n)^* \to \pgr(d,n)$ and $\pi_2\colon \pgr(d,n)\times(\P^n)^*\to (\P^n)^*$. This incidence correspondence is in fact a projective bundle over $\pgr(d,n)$ of rank $n-d-1$. Indeed, this is a simple linear algebra computation: By changing the basis of the ambient projective space, we can assume that $L$ is represented by the matrix
\[
\begin{pmatrix}
I_{d+1} \\
0_{n-d}
\end{pmatrix}
\]
by choosing a basis of $L$ and extending it to any basis of the ambient space. Then a neighborhood of $L$ in $\pgr(d,n)$ consists of all linear subspaces with basis of the form
\[
M = \begin{pmatrix}
I_{d+1} \\
A
\end{pmatrix}
\]
for any $(n-d)\times (d+1)$-matrix $A$. In fact, this is a standard affine chart of the Grassmannian $W$, see e.g.~\cite{harrisMR1416564}. A point $v = (v_0:v_1:\ldots:v_d:w)\in(\P^n)^*$ is the normal vector of a hyperplane containing the linear space represented by the above matrix $M$ if and only if $v$ is in the left kernel of $M$. So the local trivialization of $\pi_1\colon \Sigma \to \pgr(d,n)$ around $L$ is the map
\[
\left\{
\begin{array}[]{c}
W\times \P^{n-d-1} \to \pi_1^{-1}(W)\\
(M,w) \mapsto (M,[-w A,w])
\end{array}
\right.
.
\]

With this structure in mind, the proof of the claim is elementary topology.
By continuity of $\pi_2$, the set $\pi_2^{-1}(U)\subset\Sigma$ is an open subset of $\Sigma$. We claim that $\pi_1(\pi_2^{-1}(U))\subset \pgr(d,n)$ is also open. Being open is a local property, so we can locally trivialize the projection $\pi_1$ around a point $L\in \pi_1(\pi_2^{-1}(U))$ and conclude the claim from the fact that coordinate projections are open maps.
\end{proof}

\section{Convex hull of finitely many projections of spectrahedra}
This section is based on a note that the second author wrote together with Tim Netzer and that previously appeared on ArXiV, see \cite{NetzerSinn}.
He kindly gave us permission to add this note to the present manuscript.

A spectrahedron is a set defined by a linear matrix inequality. A projection of a spectrahedron is often  called a \textit{semidefinitely representable set}. 
We prove here that the convex hull of finitely many projections of spectrahedra is again a projection of a spectrahedron. This generalizes Theorem 2.2 from Helton and Nie \cite{HeltonNieNecSuffSDP}, which is the same result in the case that all sets are bounded or that the convex hull is closed. 
The proof is based on the homogenization strategy described above in subsection~\ref{ssec:fullcone}.

\begin{Prop} If $S\subseteq \R^n$ is a projection of a spectrahedron, then so is $\cone(S)$, the conic hull of $S$.
\end{Prop}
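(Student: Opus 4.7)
The proof follows the homogenization strategy from Section~\ref{ssec:fullcone} and parallels Theorem~\ref{thm:HomSDP}. Write $S = \pi(\sS)$ where $\sS = \{y \in \R^m : A_0 + \sum_{i=1}^m y_i A_i \succeq 0\}$ and $\pi \colon \R^m \to \R^n$ is linear. The plan is to exhibit $\cone(S)$ as the image under a linear map of a suitable spectrahedron.

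A first attempt is the naive homogenization
\[
\{\pi(w) : \exists \lambda \geq 0, \; \lambda A_0 + \textstyle\sum_i w_i A_i \succeq 0 \}.
\]
Every $z = \mu \pi(y) \in \cone(S)$ with $y \in \sS$ is captured by $\lambda = \mu$, $w = \mu y$, so $\cone(S)$ is contained in this set. The reverse inclusion fails in general, however, at $\lambda = 0$: the condition degenerates to $\sum w_i A_i \succeq 0$ and projects onto the recession directions of $\sS$, which only lie in $\clos(\cone(S))$.

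To suppress this spurious contribution at $\lambda = 0$, I would introduce an auxiliary scalar $r$ together with $2 \times 2$ block constraints exactly as in Theorem~\ref{thm:HomSDP}, namely
\[
\begin{bmatrix} \lambda & w_i \\ w_i & r \end{bmatrix} \succeq 0, \qquad i = 1, \ldots, m.
\]
The diagonals force $\lambda, r \geq 0$, and when $\lambda = 0$ the determinant inequalities $-w_i^2 \geq 0$ force $w = 0$. The candidate description becomes
\[
\cone(S) = \Bigl\{ \pi(w) : \exists (\lambda, r) \in \R^2,\, \lambda A_0 + \textstyle\sum_i w_i A_i \succeq 0,\, \begin{bmatrix} \lambda & w_i \\ w_i & r \end{bmatrix} \succeq 0 \text{ for all } i \Bigr\},
\]
and stacking all PSD constraints into a single block-diagonal LMI shows the right-hand side is a projection of a spectrahedron.

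The verification is then short. For $\subseteq$, given $z = \mu \pi(y)$ with $\mu \geq 0$ and $y \in \sS$, set $\lambda = \mu$, $w = \mu y$, and pick $r$ large enough that $r \geq \mu\, y_i^2$ for every $i$. For $\supseteq$, if $\lambda > 0$ then $w/\lambda \in \sS$ and hence $\pi(w) = \lambda \pi(w/\lambda) \in \lambda\, S \subseteq \cone(S)$; if $\lambda = 0$ the auxiliary blocks force $w = 0$ and thus $\pi(w) = 0 \in \cone(S)$. The only conceptual obstacle is the $\lambda = 0$ boundary case — the gap between $\cone(S)$ and $\clos(\cone(S))$ — and this is precisely what the auxiliary $2 \times 2$ blocks are designed to handle.
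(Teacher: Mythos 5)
Your proof is correct and follows essentially the same route as the paper: homogenize the defining LMI by a scalar $\lambda$ and kill the spurious $\lambda=0$ recession directions with auxiliary $2\times 2$ blocks $\begin{pmatrix}\lambda & \cdot\\ \cdot & r\end{pmatrix}\succeq 0$, exactly the device of Theorem~\ref{thm:HomSDP}. The only (immaterial) difference is that you impose the blocks on all $m$ lifted variables $w_i$, whereas the paper, presenting the projection as a coordinate projection, imposes them only on the $n$ image coordinates $x_i$ and lets the auxiliary variables $z_j$ scale freely; both verifications go through identically.
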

\begin{proof} Since $S$ is a projection of a spectrahedron we can write $$S=\left\{x\in\R^n\mid \exists z\in\R^m\colon A+\sum_{i=1}^nx_iB_i +\sum_{j=1}^m z_jC_j\succeq 0\right\},$$ with suitable real symmetric $k\times k$-matrices $A,B_i,C_j$. Then with \begin{align*}C:=\{ x\in\R^n\mid & \exists \lambda, r \in\R, z\in\R^m\colon  \lambda A+\sum_{i=1}^n x_iB_i +\sum_{j=1}^m z_jC_j \succeq 0\  \wedge \\ & \quad \bigwedge_{i=1}^n \left(\begin{array}{cc}\lambda & x_i \\x_i & r\end{array}\right)\succeq 0\}\end{align*} we have $C=\cone(S)$ (note that $C$ is a projection of a spectrahedron, since the conjunction can be eliminated, using block matrices). 

To see "$\subseteq$" let some $x$ fulfill all the conditions from $C$, first with some $\lambda>0$. Then $a:=\frac{1}{\lambda}\cdot x$ belongs to $S$, using the first condition only. Since $x=\lambda\cdot a$, $x\in\cone(S)$. If $x$ fulfills the conditions with $\lambda=0$, then $x=0$, by the last $n$ conditions in the definition of $C$. So clearly also $x\in\cone(S)$.

For "$\supseteq$" take $x\in \cone(S)$. If $x\neq 0$ then there is some $\lambda>0$ and $a\in S$ with $x=\lambda a$. Now there is some $z\in\R^m$ with $A+\sum_i a_iB_i +\sum_j z_jC_j\succeq 0$. Multiplying this equation with $\lambda$ shows that $x$ fulfills the first condition in the definition of $C$. But since $\lambda> 0$, the other conditions can clearly also be satisfied with some big enough $r$. So $x$ belongs to $C$. Finally, $x=0$ belongs to $C,$ too.
\end{proof}

\begin{Rem}
The additional $n$ conditions in the definition of $C$ avoid problems that could occur in the case $\lambda=0.$ This is the main difference to the approach of Helton and Nie in \cite{HeltonNieNecSuffSDP}.
\end{Rem}

\begin{Cor} If $S_1,\ldots, S_t\subseteq\R^n$ are projections of spectrahedra, then also the convex hull $\conv(S_1\cup\cdots\cup S_t)$ is a projection of a spectrahedron.
\end{Cor}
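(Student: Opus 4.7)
The plan is to keep track of the scalar $\lambda$ that was projected out in the preceding proposition: for each $i$, write $S_i$ via an LMI and define
\[
\wh{S_i} \subset \R^n \times \R
\]
by exactly the same LMI system as in the proof of the preceding proposition, but now regarding $\lambda$ as an observable variable rather than an existentially quantified one. Each $\wh{S_i}$ is visibly a projection of a spectrahedron, and repeating the analysis of that proof gives
\[
\wh{S_i} = \{(\lambda a, \lambda) : \lambda > 0,\ a \in S_i\} \cup \{(0,0)\},
\]
because the $2\times 2$ block constraints force $\lambda \geq 0$ and, when $\lambda = 0$, force $y = 0$, while for $\lambda > 0$ one rescales to recover $y/\lambda \in S_i$.

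Next, form the product $\wh{S_1} \times \cdots \times \wh{S_t}$ (a projection of a spectrahedron via block-diagonal stacking of the defining LMIs), intersect with the affine hyperplane $\{\lambda_1 + \cdots + \lambda_t = 1\}$ (substituting one of the $\lambda_i$ preserves the class), and apply the linear map $((y_i, \lambda_i))_{i=1}^t \mapsto y_1 + \cdots + y_t$. By the standard closure of projections of spectrahedra under Cartesian products, affine sections and linear images, the image
\[
T := \left\{y_1 + \cdots + y_t : (y_i, \lambda_i) \in \wh{S_i},\ \sum_{i=1}^t \lambda_i = 1\right\}
\]
is again a projection of a spectrahedron.

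The final step is to verify $T = \conv(S_1 \cup \cdots \cup S_t)$, assuming without loss of generality that every $S_i$ is nonempty (empty factors may be dropped). The inclusion $T \subseteq \conv(\bigcup S_i)$ is immediate: for indices with $\lambda_i > 0$ write $y_i = \lambda_i a_i$ with $a_i \in S_i$, while $\lambda_i = 0$ forces $y_i = 0$. Conversely, any convex combination $x = \sum_j \mu_j b_j$ with $b_j \in S_{i_j}$, $\mu_j \geq 0$, $\sum_j \mu_j = 1$ can be regrouped by source: put $\lambda_i := \sum_{j : i_j = i} \mu_j$ and $y_i := \sum_{j : i_j = i} \mu_j b_j$, so convexity of $S_i$ gives $y_i/\lambda_i \in S_i$ whenever $\lambda_i > 0$. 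The main obstacle is precisely the degenerate case $\lambda_i = 0$, which is accommodated at no extra cost by the $2 \times 2$ block constraints from the preceding proposition; this is the very modification that distinguishes the present argument from \cite{HeltonNieNecSuffSDP} and lets us drop any compactness or closedness hypothesis.
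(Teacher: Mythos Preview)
Your proof is correct and is essentially the paper's argument spelled out in slightly different language. Your set $\wh{S_i}$ is exactly the conic hull $K_i=\cone(S_i\times\{1\})$ that the paper obtains by invoking the preceding proposition, and your construction ``product, then intersect with $\sum\lambda_i=1$, then apply $\sum y_i$'' is precisely the Minkowski sum $K=K_1+\cdots+K_t$ followed by the affine slice $\{x:(x,1)\in K\}$ used in the paper. The only cosmetic difference is that you re-derive the description of $\wh{S_i}$ from the LMI of the proposition rather than citing the proposition as a black box.
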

\begin{proof} Consider $\widetilde{S}_i:=S_i\times\{1\}\subseteq\R^{n+1}$, and let $K_i$ denote the conic hull of $\widetilde{S}_i$ in $\R^{n+1}$. All $\widetilde{S}_i$ and therefore all $K_i$ are projections of spectrahedra, and thus the Minkowski sum $K:=K_1+\cdots + K_t$ is also  such a projection. Now  one easily checks $$\conv(S_1\cup\cdots\cup S_t)=\left\{x\in\R^n\mid (x,1)\in K\right\},$$ which proves the result.
\end{proof}

\begin{Example} Let $S_1:=\{ (x,y)\in \R^2\mid x\geq 0, y\geq 0, xy\geq 1\}$ and $S_2=\{(0,0)\}$. Both subsets of $\R^2$ are spectrahedra, so the convex hull of their union, $$\conv(S_1\cup S_2)= \{ (x,y)\in\R^2\mid x>0, y>0 \} \cup \{ (0,0)\} ,$$ is a projection of a spectrahedron.
\end{Example}

\end{document}